\setlist{nolistsep}
\def \PG[#1,#2]{PG(#1,#2)}
\def \AG[#1,#2]{\mathbb{F}_{#2}^{#1}}
\def \N{\mathcal{N}}
\def \cN{\mathcal{N}^c}
\newtheorem{theorem}{Theorem}[section]
\newtheorem{conjecture}[theorem]{Conjecture}
\newtheorem{lemma}[theorem]{Lemma}
\newtheorem{proposition}[theorem]{Proposition}
\newtheorem{fact}{Fact}
\newtheorem{defn}{Definition}
\begin{document}

\title{Finite field Kakeya and Nikodym sets in three dimensions}
\author{Ben Lund\thanks{Rutgers University, supported by NSF grant CCF-1350572.} \and Shubhangi Saraf\footnotemark[1] \and Charles Wolf\footnotemark[1] \\}
\maketitle

\begin{abstract}
We give improved lower bounds on the size of Kakeya and Nikodym sets over  $\mathbb{F}_q^3$.  
We also propose a natural conjecture on the minimum number of points in the union of a not-too-flat set of
lines in $\mathbb{F}_q^3$, and show that this conjecture implies an optimal
bound on the size of a Nikodym set.
\end{abstract}

\section{Introduction}
Let $\mathbb{F}_q$ denote the finite field of $q$ elements.  
A \textit{Kakeya set} $K\subseteq\mathbb{F}_q^n$ is a set of points which contains `a line in every direction.' More precisely, 
for all $x\in\mathbb{F}_q^n$ there is a $y\in\mathbb{F}_q^n$ such that the
line\footnote{A \textit{line}  is an affine subspace of dimension $1$.} $\{xt+y,
t\in\mathbb{F}_q\}\subseteq K$.  

The question of establishing lower bounds for Kakeya sets over finite fields was asked by Wolff \cite{wolff1999recent}. 
In 2008, in a breakthrough result, Dvir~\cite{dvir2009kakeya} showed that for a Kakeya
set $K$ over a finite field $\mathbb{F}$ of size $q$, $|K|>\frac{q^n}{n!}$, thus exactly pinning down the exponent of $q$ in the lower bound.  Later
in 2008, Saraf and Sudan \cite{Saraf2008Improved} improved the lower bound to the
form $1/2\cdot\beta^n q^n$, where $\beta$ is approximately $1/2.6$. Moreover, Dvir showed how to construct a Kakeya set of size $\frac{q^n}{2^{n-1}}+O(q^{n-1})$ (see~\cite{Saraf2008Improved}). 
In 2009, Dvir, Kopparty, Saraf and Sudan \cite{DKSS2009Mergers} proved a lower
bound of $\frac{q^n}{2^n}$ for the size of Kakeya sets.  Thus the gap between the lower bound and the upper bound given by the construction is only at most a factor of 2, and it is a very interesting question to close this gap. Though we now know extremely strong lower bounds, we still do not know an exact bound for any dimension other than 2. 
For $n=2$, Blokhuis and Mazzocca gave exact bounds on the size of a Kakeya set of $q(q + 1)/2 + (q - 1)/2$ for odd $q$ and $q(q + 1)/2$ for even $q$.
In this paper we give improved lower bounds for dimension $n=3$, using an extension of the argument presented in \cite{Saraf2008Improved}. 

A very closely related notion to Kakeya sets is that of Nikodym sets. 
A \textit{Nikodym set} $\N \subseteq \mathbb{F}_q^n$ is
a set of points such that, through each point $p \in \AG[n,q]$, there is a line
$\ell$ such that $\ell \setminus \{p\} \subseteq \N$. 

The best known general lower bounds on the size of Nikodym sets in $\mathbb{F}_q^n$ match the corresponding bounds for Kakeya sets, and follow from the same arguments.
However, we believe that much stronger lower bounds should hold for Nikodym sets; in particular, all known constructions of Nikodym sets have size $(1-o(1))q^n$.
For most choices of $q,n$, it remains an open problem to show that the smallest Nikodym sets are always larger than the smallest Kakeya sets.

In this paper, we give an improved lower bound on the size of Nikodym sets in $\mathbb{F}_q^3$.
Combined with Dvir's construction of Kakeya sets, our bound establishes a separation between the minimum size of Kakeya sets and the minimum size of Nikodym sets in $\mathbb{F}_q^3$.
We also give a new conjecture on the minimum size of a set of lines in $\mathbb{F}_q^3$, not too many of which lie in a plane, and show that this conjecture would imply that the minimum size of a Nikodym set in $\mathbb{F}_q^3$ is $(1-o(1))q^3$.

We now present the relevant background as well as state our results for Kakeya and Nikodym sets. 
All asymptotics are in terms of $q$, which always represents the order of a finite field.  The dimension $n$ is treated as a fixed constant.
For example, $o(1)$ is a function that tends to $0$ as $q$ tends to $\infty$. 

\subsection{Kakeya sets: Background and our results}

We prove the following improved lower bound for Kakeya sets in dimension $n=3.$ 
\begin{theorem}\label{th:Kakeya}
 There exists a constant $C>0$, such that for any prime power $q > C$, if $K \subseteq \mathbb{F}_q^3$ is a Kakeya set, then $$|K|\geq
0.2107q^3.$$ 
\end{theorem}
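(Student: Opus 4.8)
The plan is to run the polynomial method of multiplicities, pushing the leading-form analysis of \cite{Saraf2008Improved} and optimizing the free parameters for $n=3$. First I would fix a multiplicity parameter $m$ and a degree parameter $d$, both taken proportional to $q$ with a ratio to be chosen at the end, and assume toward a contradiction that $|K| < 0.2107\, q^3$. The space of polynomials in $\mathbb{F}_q[x_1,x_2,x_3]$ of degree at most $d$ has dimension $\binom{d+3}{3}$, while requiring a polynomial to vanish to multiplicity $m$ at a single point imposes $\binom{m+2}{3}$ linear conditions (one per Hasse derivative of order $<m$). Hence whenever $|K|\binom{m+2}{3} < \binom{d+3}{3}$ there is a nonzero $P$ of degree $\le d$ vanishing to multiplicity $\ge m$ at every point of $K$. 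Let $D=\deg P$ and let $P_D$ be its top-degree homogeneous part, which is nonzero.

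Next I would extract information about $P_D$ from the Kakeya property. For each direction $x$, fix the line $\ell_x=\{a+tx : t\in\mathbb{F}_q\}\subseteq K$. For any Hasse derivative $P^{(i)}$ with $|i|=j<m$, the derivative $P^{(i)}$ vanishes to multiplicity $\ge m-j$ at each of the $q$ points of $\ell_x$; its restriction to $\ell_x$ is a univariate polynomial of degree $\le D-j$, so it vanishes identically as soon as $D-j<(m-j)q$, which then forces its leading coefficient $(P_D)^{(i)}(x)$ to vanish. Letting $j$ range over all values below $(mq-D)/(q-1)$ and $x$ over all directions, this shows that $P_D$ vanishes to multiplicity at least $s:=\lceil (mq-D)/(q-1)\rceil\approx m-D/q$ at \emph{every} point of $\mathbb{F}_q^3$.

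The crude way to finish is the multiplicity Schwartz--Zippel bound $s\,q^3 \le \sum_{x}\mathrm{mult}(P_D,x)\le D q^2$, giving $s\le D/q$; balancing this against $s\approx m-D/q$ together with the dimension count only reproduces the known $q^3/2^3$ threshold. The main work, and the hard part, is to improve this final counting step using the homogeneous, three-dimensional structure of $P_D$. I would stop bounding multiplicities globally and instead treat $P_D$ as a degree-$D$ surface in $\mathbb{F}_q^3$ (equivalently a plane curve of degree $D$ in $\mathbb{P}^2$), bounding more sharply the number of points at which such a form can vanish to multiplicity $\ge s$, and then feed this back into a joint optimization over the two ratios $d/q$ and $m/q$. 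The obstacle is precisely that the generic Schwartz--Zippel estimate is tight for forms such as $(x^q-x)^s$, so any genuine gain must exploit that $P_D$ arises as a Kakeya leading form: the directions fill all of $\mathbb{F}_q^3$ and their lines are spread across many planes rather than concentrated, and converting this structural restriction into a quantitative multiplicity bound is where the real effort lies. Once such a refined bound is in hand, the constant $0.2107$ should emerge from numerically optimizing the parameters, and I expect the same framework (together with the exact two-dimensional bounds of Blokhuis--Mazzocca used on plane sections) to be the natural place to squeeze out the final constant.
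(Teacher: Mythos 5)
Your proposal is a plan with its central step missing, not a proof. Everything you actually establish is the standard extended method of multiplicities of Dvir--Kopparty--Saraf--Sudan: interpolation with $\binom{d+3}{3}$ versus $|K|\binom{m+2}{3}$, propagation of vanishing (via Hasse derivatives) to the leading form $P_D$ at every direction, and the multiplicity Schwartz--Zippel count $s\,q^3 \leq Dq^2$. As you yourself note, balancing these yields only $|K| \geq (1/8 - o(1))q^3 = 0.125\,q^3$, which falls well short of $0.2107\,q^3$ (indeed short of the prior Saraf--Sudan bound $0.2083\,q^3$ that the theorem is improving). The ``refined multiplicity bound for homogeneous forms exploiting Kakeya structure'' that is supposed to close this gap is never formulated, let alone proved; you explicitly defer it as ``where the real effort lies.'' Since that step is the entire content of the theorem, the proposal has a genuine gap, and there is no evidence the route you sketch can be completed: the tightness examples for multiplicity Schwartz--Zippel that you mention are a real obstruction, and no refinement of the kind you hope for is known (the paper proves none).

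For contrast, the paper's proof avoids Hasse derivatives of the leading form and multiplicity Schwartz--Zippel entirely; it instead refines the Saraf--Sudan argument, whose key feature your dimension count discards: the interpolated polynomial is required to have \emph{individual} degree at most $q-1$ in each variable, so the relevant monomial count is $N_q(3,m)$ (total degree $< mq$, individual degrees $< q$), which for small $n$ beats the $\binom{d+3}{3}$ count. The novelty is a ``fractional multiplicity'' $m$ strictly between the integers $u$ and $u+1$ (with $u=1$ in the optimal range), implemented by a Chernoff-plus-union-bound selection of a subset $S \subseteq K$ such that \emph{every} Kakeya line has roughly an $\alpha$-fraction of its points in $S$; one then interpolates $g$ vanishing with multiplicity $u$ on $S$ and $u+1$ on $K \setminus S$. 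Restricted to any Kakeya line, $g$ has at least $mq$ zeros counted with multiplicity but degree less than $mq$, hence vanishes identically, so its leading form $g_0$ vanishes at every direction $\mathbf{b}$; since $g_0$ has individual degrees at most $q-1$, it must be identically zero, a contradiction. Optimizing the resulting bound $|K| \geq \left(\frac{-2m^3+9m^2-9m+3}{6(3m-2)}+o(1)\right)q^3$ at $m=1.84$ gives the constant $0.21076$. No strengthened bound on high-multiplicity vanishing of ternary forms is needed anywhere, which is precisely why this route succeeds where yours stalls.
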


Prior to this work, the best lower bound for $n=3$ was obtained by Saraf and Sudan \cite{Saraf2008Improved}, and they achieved a lower bound of $(0.208)q^3.$

Though the quantitative improvement in the lower bound is small, we believe our proof method is interesting and might be of independent interest. The proof of Saraf and Sudan \cite{Saraf2008Improved} extended the beautiful polynomials based lower bounds of Dvir \cite{dvir2009kakeya} by using the notion of the multiplicity of roots of polynomials. Our work uses the notion of ``fractional multiplicity" to obtain the improved result. We say a few more words about these proof methods. 

Dvir \cite{dvir2009kakeya} obtained his lower bound via the following argument using polynomials: If the size of $K$ is small, then interpolate a nonzero low degree polynomial $P$ vanishing on all the points of $K$. Then, use the properties of $K$ to show that $P$ must actually vanish at all points of the underlying space\footnote{Actually in this step Dvir uses a polynomial very closely related to $P$, but for simplicity we think of it to be $P$ itself.}. However this contradicts the low degreeness of $P$. 

The work of Saraf and Sudan \cite{Saraf2008Improved} extends this idea by taking a polynomial $P$ that vanishes of each point of $K$ with some higher multiplicity $m$. To enable this, they allow the degree of $P$ to be somewhat higher, but they cap the individual degree of each variable of $P$. This idea leads to stronger bounds than those given by Dvir's argument. 

The novelty of the current work is that we allow the multiplicity $m$ to take a non-integer value. We need to now specify what it means for a polynomial to vanish with multiplicity $m$, where $m$ is a positive real number that is not an integer. For this we define a suitable random process which makes the expected multiplicity of $P$ at a point equal to $m$. By allowing $m$ to take a non-integer value we are able to make finer optimizations. 

We note that in \cite{DKSS2009Mergers} they used higher order derivatives to obtain certain lower bounds on Kakeya sets.  It may be possible to combine higher order derivatives and non-integer multiplicity to obtain bounds better than in \cite{DKSS2009Mergers} and this paper.

%The proof mainly follows that of Saraf and Sudan\cite{Saraf2008Improved}, where
%they take a polynomial of bounded individual degree in each variable and bounded
%total degree, and which vanishes on points of $K$ with the high multiplicity.
%Then they show the degree of this polynomial must be large, which creates a
%lower bound for the size of $K$. 
%In this paper we improve this technique by allowing the polynomial to vanish at
%different multiplicities at different points, which results in an improved lower
%bound. 

%Here, and throughout this paper, implicit constants depend only on $n$.

A proof of Theorem \ref{th:Kakeya} is in Section \ref{sec:Kakeya}.

\subsection{Nikodym Sets: Background and our results}

The main conjecture in the study of finite {field} Nikodym sets is the following.

\begin{conjecture}\label{conj:o1}
Let $\mathcal{N}$ be a Nikodym set in $\mathbb{F}_q^n$.
Then,
$$|\mathcal{N}| \geq (1-o(1))q^n.$$
\end{conjecture}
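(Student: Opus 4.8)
The established route to bounds of the form $cq^n$ is the polynomial method, and I would start there to see exactly how far it reaches. Let $\N \subseteq \mathbb{F}_q^n$ be a Nikodym set and fix a multiplicity parameter $m$. A dimension count produces a nonzero polynomial $P$ vanishing to multiplicity $m$ at every point of $\N$, with every individual degree capped at $q-1$, provided the number of admissible monomials exceeds the number $|\N|\binom{m+n-1}{n}$ of interpolation constraints. For each point $p$, the Nikodym line $\ell_p$ satisfies $\ell_p \setminus \{p\} \subseteq \N$, so the univariate restriction $P|_{\ell_p}$ vanishes to multiplicity $m$ at the $q-1$ points of $\ell_p \setminus \{p\}$. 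Once the degree of $P$ along that line drops below $m(q-1)$, this forces $P|_{\ell_p} \equiv 0$, hence $P(p)=0$; since $p$ is arbitrary, $P$ vanishes on all of $\mathbb{F}_q^n$, and capping the individual degrees below $q$ makes this impossible for a nonzero $P$. Optimizing $m$ against the degree budget is exactly the calculation behind the constant-factor bounds in this paper.

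\textbf{Why this stalls below $1$.} The difficulty is that this argument seems structurally incapable of reaching the constant $1$. Each point $p$ carries only a single Nikodym line, so the line argument can force $P(p)=0$ — vanishing to multiplicity $1$ in one direction — but not vanishing to the full multiplicity $m$ at $p$. To reach $(1-o(1))q^n$ one would want $P$ to vanish to multiplicity $m$ at \emph{every} point, which needs transverse information that a lone line through $p$ cannot supply. This is in sharp contrast with the Kakeya setting, where the lines range over all directions and can be used to kill the top-degree part of $P$. I therefore do not expect a purely Dvir--Saraf--Sudan-style argument to settle the conjecture.

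\textbf{An incidence-geometric route.} The more promising plan is to pass to the dual combinatorial picture. Since $\N \supseteq \bigcup_{p}(\ell_p \setminus \{p\})$ and a given line can serve as $\ell_p$ only for the $q$ points it contains, there are at least $q^{n-1}$ distinct near-full Nikodym lines, and it suffices to show that their union covers $(1-o(1))q^n$ points. I would split into two regimes. If the family $\{\ell_p\}$ is not too concentrated — only a small fraction of the lines lie in any single hyperplane — then the pairwise overlaps are controlled and an incidence / line-union lower bound should force the union to be almost all of $\mathbb{F}_q^n$. If instead a positive fraction of the lines pile into low-dimensional flats, I would restrict attention to such a flat, observe that a Nikodym-type condition is inherited there, and recurse on the dimension. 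For $n=3$ this dichotomy is precisely where the paper's new conjecture on the minimum size of a not-too-flat family of lines enters.

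\textbf{The main obstacle.} The crux, and the step I expect to be genuinely hard, is the concentrated regime: bounding how many of the Nikodym lines can accumulate in planes or hyperplanes while still servicing every point of the space. Quantifying this is exactly the content of the line-union conjecture, and until that (or a new mechanism for manufacturing transverse multiplicity at each point) is in hand, the $(1-o(1))q^n$ bound stays out of reach — which is why the statement is posed here as a conjecture rather than a theorem.
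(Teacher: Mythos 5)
You have correctly recognized that this statement is an open conjecture with no proof in the paper, and your assessment of why it is hard and how one might attack it is accurate rather than a gap. Your sketched program --- passing to the dual family of near-full Nikodym lines, applying incidence/line-union lower bounds in the spread-out regime, and isolating the concentrated (coplanar) regime as the real obstruction --- is essentially the paper's own program: Theorem \ref{th:3DNikodym} is exactly the incidence-bound step, Proposition \ref{thm:notTooManyLinesInAPlane} controls how many of the lines can lie in a plane, and Theorem \ref{th:unionOfLinesImplieso1} formalizes your dichotomy by reducing the $n=3$ case of Conjecture \ref{conj:o1} to Conjecture \ref{conj:weakUnionOfLines}.
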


Conjecture \ref{conj:o1} is open for general $q$ and $n>2$.
Guo, Kopparty, and Sudan  \cite{guo2013new} proved Conjecture \ref{conj:o1} for all dimensions, but only over fields of constant characteristic.

In the plane, much more is known.
Bounds on the maximal size of minimal blocking sets \cite{bruen1977blocking} can easily be adapted to show that the complement of a Nikodym set in $\PG[2,q]$ has size at most $q^{3/2} + 1$.
In the case that $q$ is not square, Sz\H{o}nyi et. al. \cite{szonyi2005large} improved this bound to $q^{3/2} + 1 - \frac{1}{4}s(1-s)q$, where $s$ is the fractional part of $\sqrt{q}$.
Blokhuis et. al. \cite{blokhuis2015blocking} showed that, if $q$ is a square, then there a Nikodym set in $\PG[2,q]$, the complement of which has size $q^{3/2} - O(q \log q)$.

\begin{comment}
Conjecture \ref{conj:o1} is known in some special cases.
Feng, Li, and Shen  \cite{feng2010some} showed that the complement of a Nikodym set in $\mathbb{F}_q^2$ is at most $q^{3/2} + q$ points, though a stronger bound of $q^{3/2}$ follows quickly from the work of Bruen and Thass (CITE) on maximum sized minimal blocking sets; we discuss the relationship between these problems in Section $\ldots$.
Guo, Kopparty, and Sudan  \cite{guo2013new} proved Conjecture \ref{conj:o1} for
all dimensions, but only over fields of constant characteristic.
Otherwise, all of the arguments giving bounds on Kakeya sets give matching bounds for Nikodym sets.
\end{comment}

In Section \ref{sec:3DNikodym}, we prove the following theorem which gives the first separation between the minimum
possible size of Kakeya and Nikodym sets in $\mathbb{F}_q^3$ for arbitrary
sufficiently large prime power $q$.

\begin{theorem}\label{th:3DNikodym}
Let $\mathcal{N}$ be a Nikodym set in $\mathbb{F}_q^3$ with $q$ sufficiently large.
Then,
$$|\mathcal{N}| \geq 0.38 q^3.$$
\end{theorem}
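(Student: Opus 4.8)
The plan is to run the polynomial method of multiplicities, exploiting the feature that a Nikodym set contains a full punctured line through \emph{every} affine point (rather than just one line in every direction, as for a Kakeya set), and to extract the final savings using the ``fractional multiplicity'' device introduced for Theorem \ref{th:Kakeya}. Write $|\mathcal{N}| = \delta q^3$ and suppose toward a contradiction that $\delta < 0.38$. First I would interpolate a nonzero polynomial $P$ on $\mathbb{F}_q^3$ that vanishes to multiplicity $m$ at every point of $\mathcal{N}$, keeping both its total degree $d$ and, following Saraf--Sudan, the degree in each variable under control. Such a $P$ exists as soon as the number of free coefficients exceeds the number of imposed linear conditions, i.e. roughly when $\binom{d+3}{3} > \delta q^3 \binom{m+2}{3}$; the freedom to take $m$ non-integral, in the averaged sense of Theorem \ref{th:Kakeya}, is what will let me tune the constant sharply at the end.

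The heart of the argument is a propagation step special to Nikodym sets. Fix any $p \in \mathbb{F}_q^3$ and let $\ell_p$ be the line with $\ell_p \setminus \{p\} \subseteq \mathcal{N}$. For each Hasse derivative $P^{(\mathbf{i})}$ of order $s = |\mathbf{i}|$, the polynomial $P^{(\mathbf{i})}$ still vanishes to multiplicity $m-s$ at each of the $q-1$ points of $\ell_p \cap \mathcal{N}$; restricting to $\ell_p$ yields a univariate polynomial of degree at most $d$ with total root multiplicity at least $(m-s)(q-1)$. Hence whenever $d < (m-s)(q-1)$ this restriction vanishes identically, so $P^{(\mathbf{i})}(p) = 0$. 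Since this holds for \emph{every} $p$, I conclude that $P$ vanishes to multiplicity $m' = \lceil m - d/(q-1) \rceil$ at every point of $\mathbb{F}_q^3$: it is precisely the availability of a line through each affine point that upgrades ``$P$ vanishes everywhere'' to ``$P$ vanishes to high multiplicity everywhere,'' which is the source of the gain over the Kakeya setting. I would then close with the Schwartz--Zippel bound with multiplicities, which forces $d$ to be large relative to $m'q$; comparing this against the interpolation inequality and optimizing over the ratios $d/q$ and $m/q$, with $m$ now allowed to be fractional, is intended to produce the numerical contradiction whenever $\delta < 0.38$.

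The main obstacle, and where essentially all of the quantitative content lives, is squeezing the constant past the $q^3/8$ barrier that a naive total-degree calculation produces. This requires (i) using bounded individual degrees so that the Schwartz--Zippel step pays a more favorable exchange rate between $d$ and the everywhere-multiplicity $m'$; (ii) controlling carefully the loss term $d/(q-1)$ in the propagated multiplicity, together with the fact that the restriction of a bounded-individual-degree polynomial to a non-axis-parallel line can have degree as large as three times the individual-degree bound, which is the dominant source of inefficiency; and (iii) the fractional-multiplicity optimization of Theorem \ref{th:Kakeya}, needed to set $m$ to its optimal non-integer value. A secondary technical point is that some lines $\ell_p$ meet $\mathcal{N}$ in exactly $q-1$ rather than $q$ points, and the excluded point $p$ may or may not lie in $\mathcal{N}$; these boundary effects must be tracked through the root-counting, but they should only perturb the final constant by $o(1)$. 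I would also keep in reserve the geometric perspective underlying the paper's conjecture, namely lower-bounding the union of the not-too-flat family of lines $\{\ell_p\}$ directly, as a cross-check on the polynomial-method optimization.
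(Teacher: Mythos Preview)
Your plan follows the polynomial method with multiplicities, but this route cannot reach the constant $0.38$; the paper's proof is entirely different, relying on a spectral incidence bound.

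The concrete gap is in your expectation that the propagation step ``upgrades `$P$ vanishes everywhere' to `$P$ vanishes to high multiplicity everywhere' '' and that this is a genuine gain over the Kakeya setting. Quantitatively it is not. Run your own outline: with total degree $d$ and multiplicity $m$ on $\mathcal{N}$, propagation gives multiplicity $m' \approx m - d/q$ everywhere, and multiplicity Schwartz--Zippel forces $d \geq m'q$, hence $d \geq mq/2$; interpolation then yields $|\mathcal{N}| \lesssim (d/m)^3/q^0 \approx q^3/8$. This is exactly the DKSS bound $q^n/2^n$, which DKSS already obtain for Kakeya sets via the extended method of multiplicities (homogenization plays the role of your propagation). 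If instead you impose individual degree $<q$, then $m'\geq 1$ already forces $P\equiv 0$, so the relevant constraint is simply $d<mq$ and you recover $|\mathcal{N}| < N_q(3,m)/\binom{m+2}{3}$, the Saraf--Sudan Kakeya bound; with the fractional-multiplicity refinement this tops out at about $0.2107\,q^3$. Pushing to $m'\geq 2$ only tightens the degree constraint and makes interpolation harder. In short, the polynomial method as you describe it is exactly what the paper means when it says the best known Nikodym bounds ``match the corresponding bounds for Kakeya sets, and follow from the same arguments.''

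The paper instead applies the bipartite expander mixing lemma to the point--line incidence graph of $\mathbb{F}_q^3$. Writing $L$ for the set of $|\mathcal{N}^c|$ lines guaranteed by the Nikodym definition and $P=\mathcal{N}$, one has $I(P,L)=(q-1)|L|$ exactly, while the spectral bound gives
\[
I(P,L)\leq (1+o(1))\Bigl(|P||L|q^{-2}+q\sqrt{|P||L|(1-|P|q^{-3})}\Bigr).
\]
Setting $|P|=q^3-|L|$ and solving yields $|L|\leq\bigl(\tfrac{\sqrt{5}-1}{2}+o(1)\bigr)q^3\approx 0.618\,q^3$, hence $|\mathcal{N}|\geq 0.38\,q^3$. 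The crucial input your proposal does not exploit is that there are \emph{many} lines (one per point of $\mathcal{N}^c$, so $\Theta(q^3)$ of them), not merely one line per direction; the polynomial method does not see this distinction, but the incidence bound does.
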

While this falls short of proving the case $n=3$ of Conjecture \ref{conj:o1}, it
does show a separation between Kakeya and Nikodym sets in $\mathbb{F}_q^3$,
since the construction in \cite{Saraf2008Improved} gives a
Kakeya set of size $(0.25 + o(1))q^3$.

\subsubsection{A conjecture on the union of lines}
For $L$ a set of lines, we define $P(L)$ to be the collection of points contained in some line of $L$. More precisely,
$$P(L) = \bigcup_{\ell \in L}\{p \mid p \in \ell\}.$$

In Section \ref{sec:unionOfLines}, we show that a slight modification of the proof of Theorem \ref{th:3DNikodym} shows that if $L$ is any set of $(0.62 + o(1))q^3$ lines in $\mathbb{F}_q^3$, then $|P(L)| \geq (0.38 -o(1)) q^3$. {A similar result appears earlier without explicit constants \cite{Oberlin2014Union}.} Such a result is stronger than Theorem~\ref{th:3DNikodym} since the definition of a Nikodym set guarantees the existence of a set $L$ of lines, one for each point in the complement of the Nikodym set, such that all but one point of each line of $L$ is contained in the Nikodym set.
We  also show that this bound on the union of lines is nearly tight, which stands in contrast to the corresponding bound on the size of a Nikodym set.

%The proof of Theorem \ref{th:3DNikodym} gives a bound on the size of any such set of lines.

The proof of Theorem \ref{th:3DNikodym} uses very little information about $L$ (the set of lines corresponding to the complement of a Nikodym set), and there is more structure that one might be able to exploit in order to get a stronger result. For example, we show in Section \ref{sec:coplanarLines} that no more than $(1+o(1))q^{3/2}$ lines of $L$ can be contained in any plane.
It may be that the approach of bounding the size of the set of lines associated to the complement of a Nikodym set could lead to a proof of Conjecture \ref{conj:o1}, if this additional structure of $L$ is used.

To this end, we propose the following conjecture.

\begin{conjecture}\label{conj:weakUnionOfLines}
	Let $C>0$ be a constant independent of $q$, and let $\alpha(q) \in \omega(q)$.
	If $L$ is a set of at least $Cq^3$ lines and no plane contains $\alpha(q)$ lines of $L$, then $|P(L)| \geq (1-o(1))q^3$.
	The $o(1)$ is a function of $q$ that depends on $C$ and $\alpha$.
\end{conjecture}
In Section \ref{sec:coplanarLines}, we show that Conjecture \ref{conj:weakUnionOfLines} implies the three dimensional case of Conjecture \ref{conj:o1}.

A similar question has been investigated in the context of Kakeya sets.
Wolff \cite{wolff1999recent} showed that if $L$ is a set of $q^2$ lines in $\mathbb{F}_q^3$, no more than $O(q)$ of which lie in any single plane, then $P(L) = \Omega(q^{5/2})$, and this immediately implies the same lower bound on the cardinality of a Kakeya set.
Mockenhaupt and Tao \cite{mockenhaupt2004restriction} observed that, in contrast with the situation for Kakeya sets, Wolff's bound on $P(L)$ is tight, at least when $q$ is square.
{Ellenberg} and Hablicsek \cite{ellenberg2013incidence} improved Wolff's result by characterizing the cases in which it is nearly tight, and in particular showed that, with the same hypotheses on $L$, if $q$ is prime then $P(L) = \Omega(q^3)$.

\begin{comment}
Conjecture \ref{conj:weakUnionOfLines} resembles a recent result of Ellenberg and Hablisek \cite{ellenberg2013incidence}.
A special case of {Ellenberg} and Hablisek's theorem states that, if $p$ is a prime and $L$ is a set of $p^2$ lines in $\mathbb{F}_p^3$ such that no more than $p$ lines of $L$ lie in any plane, then $|P(L)| = \Omega(p^3)$.
The main differences between Conjecture \ref{conj:weakUnionOfLines} and the result of Ellenberg and Hablisek is that we take $L$ to be much larger, we allow the underlying field to have composite order, and our desired conclusion is stronger.

For Ellenberg and Hablisek's result, the condition that the underlying field has prime order is necessary.
Indeed, they observe that a nondegenerate Hermitian variety in $\mathbb{F}_q^3$ for $q$ a perfect square (which we discuss further in Section \ref{sec:hermitian}) contains a set $L$ of $q^2$ lines, no more than $(1+o(1))q^{1/2}$ on any plane, such that $|P(L)| = (1+o(1))q^{5/2}$ points.
\end{comment}

Although Conjecture \ref{conj:weakUnionOfLines} would be sufficient for an application to Conjecture \ref{conj:o1}, we do not have a counterexample to the following, much stronger, conjecture.

\begin{conjecture}\label{conj:unionOfLines}
Let $\epsilon > 0$ be any constant and let $q$ be a sufficiently large prime power.
Let $L$ be a set of at least $q^{5/2 + \epsilon}$  lines in $\mathbb{F}_q^3$ such that no plane contains more than $(1/2)q^{3/2}$ lines of $L$.
Then, $|P(L)| \geq q^3 - O(q^{5/2})$.
\end{conjecture}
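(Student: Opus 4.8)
The plan is to argue by contradiction via the polynomial method, in the spirit of Dvir \cite{dvir2009kakeya} and of Ellenberg--Hablicsek \cite{ellenberg2013incidence}, but pushed into the regime where one controls all but $O(q^{5/2})$ of the space. Write $S = \mathbb{F}_q^3 \setminus P(L)$ for the set of uncovered points and suppose, for contradiction, that $|S| \geq K q^{5/2}$ for a large constant $K$. Every line of $L$ lies in $P(L)$ and hence is disjoint from $S$. First I would interpolate a nonzero polynomial $g$ vanishing on $P(L)$ with some multiplicity $m$, taking the total degree $d$ and the multiplicity as large as the dimension count permits; crucially, I would cap the individual degrees of $g$ (as in Saraf--Sudan \cite{Saraf2008Improved}) rather than only its total degree, so that the available monomial space has dimension close to $q^3$ instead of $q^3/3!$. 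Since $g$ vanishes to multiplicity $m$ at each of the $q$ points of any line $\ell \in L$, the univariate restriction $g|_\ell$ has at least $qm$ zeros counted with multiplicity, so as soon as $\deg(g|_\ell) < qm$ one concludes $\ell \subseteq Z(g)$. The goal of this step is a single surface $Z(g)$ of degree $d = O(q)$ containing every line of $L$.

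Granting such a surface, the contradiction comes from bounding how many lines a low-degree surface can hold. I would factor $g$ into irreducible components and treat them by type. The planar (linear) components number at most $d = O(q)$, and by hypothesis each plane carries at most $(1/2)q^{3/2}$ lines of $L$, so the planes account for at most $O(q)\cdot (1/2)q^{3/2} = O(q^{5/2})$ lines. For a non-planar irreducible component of degree $e \geq 2$ I would invoke the classical dichotomy for lines on a surface: either it is ruled, in which case its lines are parameterized by a curve of controlled degree in the Grassmannian and contribute $O(q)$ lines per component (quadrics, cones, and scrolls), or it is non-ruled, in which case its Fano variety of lines is finite and it carries only $O(e^2)$ lines. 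Summing over components of total degree $O(q)$ bounds the non-planar contribution by $O(q^2)$, so a degree-$O(q)$ surface through all of $L$ would force $|L| = O(q^{5/2})$, contradicting $|L| \geq q^{5/2+\epsilon}$.

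The numerology is consistent with tightness: the threshold $(1/2)q^{3/2}$ is of the order of the largest per-plane count allowed by a blocking-set-type constraint in $\PG[2,q]$, a pencil of $\sim q$ planes then contributes the $q^{5/2}$ error, and a nondegenerate Hermitian variety in $\mathbb{F}_q^3$ (which carries $\sim q^2$ lines, $\sim q^{1/2}$ per plane, yet covers only $\sim q^{5/2}$ points) indicates that the hypothesis $|L|\geq q^{5/2+\epsilon}$ cannot be weakened to $q^2$.

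The main obstacle is the first step, not the second. Interpolation forces $d \gtrsim qm$ whenever the vanishing set has size $(1-o(1))q^3$, because the dimension of degree-$d$ polynomials grows like $d^3/3!$, so the line criterion $d < qm$ fails at exactly the boundary; this is the same $1/n!$ wall that obstructs the current methods from reaching Conjecture \ref{conj:o1}. Breaking it appears to require genuinely using the hypotheses rather than treating $P(L)$ as an arbitrary point set: either reducing the effective point count by a slicing argument that transfers the planar $q^{3/2}$ blocking-set bound into each of $\sim q$ planes of a pencil, or sharpening the surface-line count so that a surface of degree even $\omega(q)$ is still forced to respect the per-plane bound. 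A secondary technical difficulty is that non-planar components may have degree $e$ as large as $\Theta(q)$, where the naive Hasse--Weil estimate for lines on ruled components, in particular cones over high-degree base curves, degrades; controlling this uniformly and confirming that the planar term dominates is where the bulk of the careful estimation would lie.
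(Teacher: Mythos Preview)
The statement you are trying to prove is Conjecture~\ref{conj:unionOfLines}, which the paper explicitly leaves open: there is no proof in the paper to compare your attempt to. The paper only argues that the conjecture, \emph{if true}, would be nearly sharp, via the Hermitian-variety constructions in Section~\ref{sec:hermitian}.

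Your outline identifies its own fatal gap correctly, and it is worth stating plainly why the first step cannot be repaired within this framework. You want a nonzero polynomial $g$ of total degree $d<mq$ (so that any line of $L$, on which $g$ vanishes to order $m$ at each of its $q$ points, is forced into $Z(g)$), while also vanishing to multiplicity $m$ on $P(L)$. In the contradiction scenario $|P(L)|\geq q^3-Kq^{5/2}$, so the multiplicity constraints number at least $(1-o(1))q^3\binom{m+2}{3}$. Even with the individual-degree cap, the monomial space has dimension at most $q^3$, and restricting to total degree $<mq$ gives at most $N_q(3,m)\leq \binom{mq+2}{3}\approx m^3q^3/6$; in either count the constraints and freedoms are of the same order, and the inequality needed for interpolation goes the wrong way precisely at $d\approx mq$. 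This is not a slack-in-the-constants issue: it is the same $1/n!$ wall that prevents the polynomial method from proving Conjecture~\ref{conj:o1}, and your suggested fixes (slicing by a pencil of planes, or strengthening the surface-line count at degree $\omega(q)$) are heuristics rather than arguments. Note also that with individual degree $<q$, a polynomial vanishing on a set of size $(1-o(1))q^3$ is dangerously close to vanishing everywhere by Fact~\ref{thm:SchwarzZippel}, so the interpolation step is essentially asking for a polynomial that does not exist.

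Your second step is more solid as a sketch: given a degree-$O(q)$ surface containing $L$, the planar components contribute $O(q)\cdot(1/2)q^{3/2}=O(q^{5/2})$ lines by hypothesis, and bounding non-planar components by ruled/non-ruled type is the right move, though your control on ruled components (``$O(q)$ lines per component'') needs the degree of the ruling curve in the Grassmannian to be bounded independently of the component degree $e$, which is not automatic for cones over high-degree base curves. In any case, since step one fails, this never gets used; as it stands the conjecture remains open and your proposal does not close it.
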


Two examples show that Conjecture \ref{conj:unionOfLines}, if true, is nearly as strong as possible.
Both constructions are based on Hermitian varieties; the definition of a Hermitian variety and the details of the second construction are given in Section \ref{sec:hermitian}.

If $V$ is a Hermitian variety in $\mathbb{F}_q^3$, then $V$ contains $q^2$ lines, no more than $q^{1/2}$ in any plane, and $q^{5/2}$ points; indeed, this is the example used by Mockenhaupt and Tao to show that Wolff's theorem cannot be improved.
Considering the union of $O(q^{1/2})$ such varieties shows that the hypothesis on $|L|$ in Conjecture \ref{conj:unionOfLines} cannot be substantially relaxed.
It may be the case that the union of any set of $q^{1/2 + \varepsilon}$ Hermitian varieties must contain $(1-o(1))q^3$ points in total; for example, analogous bounds are known to hold for affine subspaces \cite{lund2014incidence}.

If $V$ is a Hermitian variety, then through each point $p \in V$ there pass $q- q^{1/2}$ distinct lines that intersect $V$ only at $p$.
By taking a random selection of half of the points in $V$, we can obtain a set $L$ of $\Omega(q^{7/2})$ distinct tangent lines, no more than $(1/2+o(1))q^{3/2}$ of which lie in any plane, and whose union is at most $q^3 - (1/2)q^{5/2}$ points.

\begin{comment}
\begin{proposition}\label{thm:unionOfLinesHermVarConstruction}
Let $q  =p^2$ for a prime power $p$.
There is a set $L$ of $(1/2 - o(1)) q^{7/2}$ lines in $\mathbb{F}_q^3$ such that no plane contains more than $(1/2)q^{3/2}$ lines of $L$, and $|P(L)| = q^3 - (1/2 + o(1))q^{5/2}$.
\end{proposition}
\end{comment}

A proof of Conjecture \ref{conj:weakUnionOfLines} would be new and very interesting even in the case of prime order fields, for which the above constructions based on Hermitian varieties do not occur.

\begin{comment}
\subsubsection{Weak Nikodym sets}
All existing lower bounds on the size of a Nikodym set use only much weaker properties of Nikodym sets.
To capture the part of the definition that is actually used by the existing proofs, we introduce and initiate the explicit study of \textit{weak Nikodym sets}.
A weak Nikodym set $\N$ in $\mathbb{F}_q^n$ is a set of points such that, through each point $p$ in the complement $\N^c$ of $\N$, there is a line $\ell$ such that $\ell \setminus \{p\} \subseteq \N$.

In Section~\ref{sec:hermitian} we give improved constructions of weak Nikodym sets, and based on these we conjecture that, at least for fields of square order, there are weak Nikodym that contain many fewer points than any Nikodym set.
Since current lower bound proofs for Nikodym sets only use the fact that Nikodym sets are also weak Nikodym sets, these proofs are inadequate to prove such a separation.

Further, we slightly improve the bound of Feng, Li, and Shen \cite{feng2010some}
on the maximum size of the complement of a weak Nikodym set in $\mathbb{F}_q^2$,
from $q^{3/2} + q$ to $q^{3/2} + 1$.
Our new bound is \textit{exactly} tight for weak Nikodym sets in the projective
plane over $\mathbb{F}_{q}$, for $q$ a perfect square.
\end{comment}

\section{Kakeya sets in 3 dimensions} \label{sec:Kakeya}
In this section we give a proof of Theorem \ref{th:Kakeya}.

\subsection{Preliminary Results and Lemmas}
Let $\mathbb{F}_q[x_1,...,x_n]=\mathbb{F}_q[\bf{x}]$ be
the ring of polynomials in $x_1,...,x_n$ with coefficients in
$\mathbb{F}_q$.

The following is a basic and well known fact about zeroes of polynomials. 

\begin{fact}\label{thm:SchwarzZippel}
Let $P\in\mathbb{F}_q[\bf{x}]$ be a polynomial of degree at most $q-1$ in each
variable.  If $P(a)=0$ for each $a\in \mathbb{F}_q^n$, then $P\equiv 0.$
\end{fact}

Let $N_q(n,m)$ be the number of monomials in
$\mathbb{F}_q[x_1,...,x_n]$ of individual degree $<q$ and total degree $<mq$. 
Note that $m$ need not be a natural number to define $N_q(n,m)$, rather $m$ can
be any positive real number. 

\begin{lemma}\label{thm:CountingNqnm}
  
$$N_q(n,m)=\sum\limits_{i=0}^{n} (-1)^i{n\choose
i}{\lfloor(m-i)q+n-1\rfloor\choose n},$$ where $\lfloor x\rfloor$ is the largest integer that is at most $x$. 

\end{lemma}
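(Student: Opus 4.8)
The plan is to read $N_q(n,m)$ directly as a lattice-point count and then evaluate it by inclusion–exclusion together with the standard ``stars and bars'' formula. Writing a monomial as $x_1^{a_1}\cdots x_n^{a_n}$, the quantity $N_q(n,m)$ counts the integer points $(a_1,\dots,a_n)$ satisfying $0\le a_j\le q-1$ for every $j$ (individual degree $<q$) and $a_1+\cdots+a_n<mq$ (total degree $<mq$). Since the $a_j$ are integers, the last constraint is equivalent to $\sum_j a_j\le D$, where $D$ is the largest integer strictly below $mq$; I would record the clean value $D=\lceil mq\rceil-1$ and keep careful track of how $D$ interacts with floors only at the very end.

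First I would drop the upper bounds $a_j\le q-1$ and reinstate them by inclusion–exclusion over the set of variables that violate them. For a fixed $i$-element subset $S$, the number of points with $a_j\ge q$ for all $j\in S$ (and all $a_j\ge 0$) and $\sum_j a_j\le D$ is computed by the shift $a_j\mapsto a_j-q$ for $j\in S$, which turns the constraint into $\sum_j a_j\le D-iq$ over the nonnegative integers. By symmetry this depends only on $i=|S|$, so the inclusion–exclusion contributes the factor $\binom{n}{i}$.

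Second, I would invoke the elementary identity that the number of nonnegative integer solutions of $a_1+\cdots+a_n\le t$ equals $\binom{t+n}{n}$ (introduce one slack variable and apply the hockey-stick identity), adopting the convention that a binomial coefficient vanishes when its upper index is an integer smaller than its lower one, so the terms with $D-iq<0$ drop out automatically. Combining the two steps yields
$$N_q(n,m)=\sum_{i=0}^{n}(-1)^i\binom{n}{i}\binom{D-iq+n}{n}.$$
Because $iq$ and $n$ are integers they can be moved inside the floor/ceiling, and with $D=\lceil mq\rceil-1$ one gets $D-iq+n=\lceil (m-i)q\rceil+n-1$; the reconciliation of this with the claimed $\lfloor (m-i)q+n-1\rfloor$ is the final bookkeeping step.

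The step I expect to be the main obstacle is precisely this last conversion. The count is genuinely governed by $\lceil mq\rceil$ (the largest integer below $mq$), whereas the statement is phrased with a floor; the two agree automatically when $mq\in\mathbb{Z}$, but when $mq\notin\mathbb{Z}$ one must check that replacing each ceiling by the corresponding floor alters each summand only by lower-order binomials whose alternating sum cancels (equivalently, one adopts the generalized binomial convention under which $\binom{-1}{n}$ and the like are nonzero). A generating-function computation gives a tidy alternative that makes this transparent: $N_q(n,m)$ is the sum of the coefficients of $x^0,\dots,x^{D}$ in $\bigl((1-x^q)/(1-x)\bigr)^n$, and multiplying by $1/(1-x)$ to form partial sums reduces the whole count to extracting a single coefficient of $(1-x^q)^n/(1-x)^{n+1}$, reproducing the same alternating sum while exhibiting the exact role of each floor.
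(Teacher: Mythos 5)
Your derivation follows the same route as the paper's own proof (inclusion--exclusion over which variables violate the cap $a_j\le q-1$, then stars and bars), and everything up to the identity
$$N_q(n,m)=\sum_{i=0}^{n}(-1)^i\binom{n}{i}\binom{D-iq+n}{n},\qquad D=\lceil mq\rceil-1,$$
with the convention that $\binom{a}{n}=0$ for integers $a<n$, is correct. However, the step you deferred as ``final bookkeeping'' is not bookkeeping, and neither of your two proposed repairs can work: when $mq\notin\mathbb{Z}$ the floor-based formula in the statement genuinely differs from your ceiling-based count. Since $iq\in\mathbb{Z}$, the stated formula is precisely the correct formula for the \emph{integer} threshold $\lfloor mq\rfloor$, i.e.\ it counts monomials of total degree $\le\lfloor mq\rfloor-1$, whereas the constraint ``total degree $<mq$'' allows degree $\lfloor mq\rfloor$; the discrepancy is exactly the number of monomials of total degree $\lfloor mq\rfloor$ with all individual degrees $<q$, which is nonzero whenever such a monomial exists (it is $\Theta(q^{n-1})$ in the paper's application, where $m\approx 1.84$). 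Concretely, for $n=2$, $q=2$, $m=1.25$ the true count is $N_q(2,1.25)=4$ (all of $1,x_1,x_2,x_1x_2$ have degree $\le 2<2.5$), but the stated formula gives $\binom{3}{2}-2\binom{1}{2}+\binom{-1}{2}=3$ under the vanishing convention. The generalized-binomial convention does not rescue it: it happens to give $4$ here, but for $n=2$, $q=3$, $m=0.5$ it gives $\binom{2}{2}-2\binom{-1}{2}+\binom{-4}{2}=1-2+10=9$ against a true count of $3$, and it even breaks the integer case ($n=2$, $q=2$, $m=1$ yields $4$ instead of $3$). So the alternating sum of the correction terms does not cancel, and no single convention makes the statement true for all real $m>0$.

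In other words, you have correctly proved a \emph{corrected} version of the lemma, and the obstacle you flagged is an actual error in the statement: as written, the identity holds when $mq\in\mathbb{Z}$ and fails otherwise. The paper's own proof commits the error at its first step, asserting that the number of monomials of total degree $<mq$ is $\binom{\lfloor mq+n-1\rfloor}{n}$; the correct count is $\binom{\lceil mq\rceil+n-1}{n}$, and these differ exactly when $mq\notin\mathbb{Z}$. The slip is harmless downstream, since the paper only uses the leading coefficient of $N_q(3,m)$ as a polynomial in $q$ and the discrepancy is $O(q^{n-1})$, so Theorem \ref{th:Kakeya} is unaffected; but the exact statement should carry $\lceil (m-i)q\rceil+n-1$ (equivalently, your $D-iq+n$) in place of $\lfloor (m-i)q+n-1\rfloor$, which is what your argument establishes.
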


\begin{proof}
The proof will be via inclusion-exclusion. Consider the total number of monomial terms
of a polynomial of total degree strictly less than $mq$.  
This equals $\lfloor mq+n-1\rfloor\choose n$. We only want to include those monomials in our count that have individual degree at most $q-1$.  
Let $C_r$ be the total number of monomials of total degree strictly less than $mq$ and some particular $r$ of the variables having degree $q$ or more. Then by inclusion-exclusion, $$N_q(n,m) = \sum\limits_{i=0}^{n} (-1)^i{n\choose
i} C_i.$$ 

It is not hard to see that $C_i = {\lfloor(m-i)q+n-1\rfloor\choose n}$ since if a particular set of $i$ variables must have degree at least $q$, we can ``peel off" degree $q$ part from each of these variables to get a resulting monomial of total degree at most $\lfloor(m-i)q+n-1\rfloor$. $C_i$ is then then number of such monomials which equals ${\lfloor(m-i)q+n-1\rfloor\choose n}$. 

 %Now consider the ``bad terms:''
%those monomials with some variable having degree greater than or equal to $q$.  
%To count these bad terms, fix a variable $x_j$.  Assume the degree of $x_j$ in a
%term is greater than or equal to $q.$  Now remove $x_j$ from the monomial.  Since the total degree of the monomial is less than
%$mq$, this means the total contribution to the degree from the remaining
%variables in the monomial is less than $(m-1)q.$  Now return $x_j$ to the monomial.   So the number of terms with degree of $x_j$ greater
%than or equal to $q$ is $\lfloor(m-1)q+n-1\rfloor\choose n$.  
%Since there are $n$ variables, there are
%$n{\lfloor(m-1)q+n-1\rfloor\choose n}$ terms with a variable of degree greater than or equal to $q$.

%In general, consider the terms that have $i$ variables each with individual
%degree greater than or equal to $q$.  Temporarily remove these $i$ variables. Then the total contribution of the degree of the remaining
%variables is less than $(m-i)q$.  After returning the variables with degee greater than or equal to $q$ the number of terms with $i$ variables each with
%degree greater than or equal to $q$ is ${n\choose i}{\lfloor(m-i)q+n-1\rfloor\choose n}$.  
%So via inclusion-exclusion, $N_q(n,m)=\sum\limits_{i=0}^{n} (-1)^i{n\choose
%i}{\lfloor(m-i)q+n-1\rfloor\choose n}$.
\end{proof}

\begin{defn}(multiplicity)
For a polynomial $g\in \mathbb{F}_q[\bf{x}]$, we say $g$ vanishes at a point
$\bf{a}$ with multiplicity $m$ if $g(\bf{x+a})$ has no monomial term of degree
lower than $m$.
\end{defn}

The following lemma is a simple adaptation of a lemma from \cite{Saraf2008Improved} (where instead of two sets $S_1$ and $S_2$ there was only one set).

\begin{lemma}\label{thm:Constraints}
Let $m_1 \geq 0$ and $m_2\geq 0$ be integers and $m > 0$ be a real number. Let $S_1, S_2 \subset \mathbb{F}_q^n$ be disjoint sets such that 
$|S_1|{m_1+n-1\choose n}+|S_2|{m_2+n-1\choose n}<N_q(n,m)$. Then there exists a
non-zero polynomial  $g\in \mathbb{F}_q[\bf{x}]$ of total degree less than $mq$
and individual degree at most $q-1$ such that $g$ vanishes on each point of $S_1$
with multiplicity $m_1$ and on $S_2$ with multiplicity $m_2$.  
\end{lemma}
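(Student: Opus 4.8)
The plan is to recast the statement as a standard dimension count in linear algebra. I would begin by fixing the $\mathbb{F}_q$-vector space $V$ consisting of all polynomials in $\mathbb{F}_q[\mathbf{x}]$ of individual degree at most $q-1$ and total degree less than $mq$. By the very definition of $N_q(n,m)$, the monomials satisfying these degree restrictions form a basis of $V$, so $\dim_{\mathbb{F}_q} V = N_q(n,m)$ (this is exactly the quantity computed in Lemma~\ref{thm:CountingNqnm}). The polynomial $g$ we seek will live in $V$, so its degree constraints hold automatically, and the whole task reduces to producing a \emph{nonzero} $g \in V$ meeting the prescribed vanishing conditions.

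Next I would translate each vanishing condition into homogeneous linear constraints on the coefficients of $g$. Fix a point $a$. By definition, $g$ vanishing at $a$ with multiplicity $m_1$ means that $g(\mathbf{x}+a)$ has no monomial of total degree less than $m_1$. The key observation is that the shift map $g \mapsto g(\mathbf{x}+a)$ is $\mathbb{F}_q$-linear, so every coefficient of $g(\mathbf{x}+a)$ is a linear function of the coefficients of $g$. Requiring all coefficients of monomials of total degree less than $m_1$ to vanish therefore imposes a number of homogeneous linear constraints equal to the number of monomials in $n$ variables of total degree at most $m_1 - 1$, which is $\binom{m_1+n-1}{n}$. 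The identical reasoning applies to each point of $S_2$ with $m_2$ in place of $m_1$.

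Summing over all the points, the combined requirement that $g$ vanish to multiplicity $m_1$ on each point of $S_1$ and to multiplicity $m_2$ on each point of $S_2$ amounts to at most $|S_1|\binom{m_1+n-1}{n} + |S_2|\binom{m_2+n-1}{n}$ homogeneous linear equations in the $N_q(n,m)$ unknown coefficients of $g$. The solution set is precisely the kernel of the associated linear map $V \to \mathbb{F}_q^{t}$, where $t$ is the total number of constraints. The hypothesis $|S_1|\binom{m_1+n-1}{n} + |S_2|\binom{m_2+n-1}{n} < N_q(n,m)$ says exactly that $t < \dim V$, so this map cannot be injective; its kernel is nontrivial, and any nonzero element is a polynomial $g$ with all the desired properties.

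I do not expect a genuine obstacle here: the substantive input is the dimension formula for $N_q(n,m)$, already supplied, combined with the elementary facts that shifting by $a$ is $\mathbb{F}_q$-linear and that a homogeneous linear system with fewer equations than unknowns has a nonzero solution. The only point requiring care is the bookkeeping that identifies ``vanishing to multiplicity $m_i$'' with exactly $\binom{m_i+n-1}{n}$ constraints and confirms that these constraints are truly linear in the coefficients of $g$; once that is in place, the conclusion is immediate.
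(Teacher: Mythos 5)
Your proposal is correct and is essentially the paper's own proof: both arguments count the $N_q(n,m)$ monomial coefficients as unknowns, observe that vanishing to multiplicity $m_i$ at a point imposes $\binom{m_i+n-1}{n}$ homogeneous linear constraints, and conclude via the pigeonhole fact that a homogeneous system with fewer equations than unknowns has a nonzero solution. The extra detail you supply (linearity of the shift $g \mapsto g(\mathbf{x}+a)$) is a correct elaboration of a step the paper leaves implicit.
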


\begin{proof}
The total number of possible monomials in $g$ is $N_q(n,m)$. We consider the coefficients of these monomials to be free variables. 
For each point $\bf{a}\in \mathbb{F}_q^n$, requiring that the polynomial vanishes on $\bf{a}$ with multiplicity $m_i$ adds $m_i+n-1\choose n$ homogeneous linear constraints on these coefficients. 
Requiring that $g$ vanishes on each point of $S_1$
with multiplicity $m_1$ and on $S_2$ with multiplicity $m_2$ imposes a total of $|S_1|{m_1+n-1\choose n}+|S_2|{m_2+n-1\choose n}$ homogeneous linear constraints. Since $|S_1|{m_1+n-1\choose n}+|S_2|{m_2+n-1\choose n}<N_q(n,m)$, thus the total number of  homogeneous linear constraints is strictly less than the number of variables and hence a nonzero solution exists. 
Thus there exists a
non-zero polynomial  $g\in \mathbb{F}_q[\bf{x}]$ of total degree less than $mq$
and individual degree at most $q-1$ such that g vanishes on each point of $S_1$
with multiplicity $m_1$ and on $S_2$ with multiplicity $m_2$.
\end{proof}

For $g\in \mathbb{F}_q[\bf{x}]$ let $g_{\bf{a,b}}(t)=g(\textbf{a}+t\textbf{b})$
denote its restriction to the ``line''
$\{\textbf{a}+t\textbf{b},t\in\mathbb{F}_q\}.$  

The lemma below is a basic result that also appears in~\cite{Saraf2008Improved}. 

\begin{lemma}\label{thm:Univariate}
If $g\in \mathbb{F}_q[\bf{x}]$ vanishes with multiplicity $m$ at some point
$\textbf{a}+t_0\textbf{b}$ then $g_{a,b}$ vanishes with multiplicity m at $t_0$.
\end{lemma}
\begin{proof}
By definition, the fact that $g$ has a zero of multiplicity $m$ at
$\textbf{a}+t_0\textbf{b}$ implies that the polynomial $g(\textbf{x}+\textbf{a}+t_0\textbf{b})$ has no support on
monomials of degree less than $m$.   Thus under the homogeneous substitution of
$\textbf{x}\to t\textbf{b}$, we get no monomials of degree less than $m$ either, and thus we have $t^m$ divides $g(t\textbf{b}+\textbf{a}+t_0\textbf{b})=g(\textbf{a}+(t+t_0)\textbf{b})=g_{\textbf{a,b}}(t+t_0)$.  Hence $g_{\textbf{a,b}}$ has a zero of multiplicity $m$ at $t_0.$

 \end{proof}

The following theorem was the lower bound result from \cite{Saraf2008Improved}.
\begin{theorem}[Kakeya lower bound from~\cite{Saraf2008Improved}]\label{thm:2008saraf}
If K is a Kakeya set in $\mathbb{F}_q^n$, then $|K|\geq\frac{1}{{m+n-1\choose 
n}}N_q(n,m)$. 
\end{theorem}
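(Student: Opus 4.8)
The plan is to argue by contradiction using the polynomial method with multiplicities, feeding the three preceding lemmas into each other. Suppose, for the sake of contradiction, that $|K| < N_q(n,m)/\binom{m+n-1}{n}$, which rearranges to $|K|\binom{m+n-1}{n} < N_q(n,m)$. I would then apply Lemma \ref{thm:Constraints} with $S_1 = K$, $m_1 = m$, and $S_2 = \emptyset$ (so $m_2 = 0$), which under exactly this hypothesis produces a nonzero polynomial $g \in \mathbb{F}_q[\mathbf{x}]$ of total degree less than $mq$ and individual degree at most $q-1$ that vanishes with multiplicity $m$ at every point of $K$.

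The second step is to use the Kakeya property to force $g$ to vanish excessively along lines. Fix any direction $\mathbf{b} \in \mathbb{F}_q^n$. By the definition of a Kakeya set there is a base point $\mathbf{a}$ with the whole line $\{\mathbf{a} + t\mathbf{b} : t \in \mathbb{F}_q\}$ contained in $K$. Since $g$ vanishes with multiplicity $m$ at each point of this line, Lemma \ref{thm:Univariate} shows that the univariate restriction $g_{\mathbf{a},\mathbf{b}}(t)$ vanishes with multiplicity $m$ at each of the $q$ values $t_0 \in \mathbb{F}_q$. Thus $g_{\mathbf{a},\mathbf{b}}$ has at least $mq$ roots counted with multiplicity, while $\deg g_{\mathbf{a},\mathbf{b}} \leq \deg g < mq$; hence $g_{\mathbf{a},\mathbf{b}} \equiv 0$.

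The final step extracts the contradiction from the top homogeneous form. Write $d = \deg g$ and let $g_d$ denote the degree-$d$ homogeneous component of $g$, which is nonzero by the choice of $d$. The coefficient of $t^d$ in $g(\mathbf{a}+t\mathbf{b})$ is exactly $g_d(\mathbf{b})$, so $g_{\mathbf{a},\mathbf{b}} \equiv 0$ forces $g_d(\mathbf{b}) = 0$. As this argument applies to every direction $\mathbf{b} \in \mathbb{F}_q^n$, the polynomial $g_d$ vanishes identically on $\mathbb{F}_q^n$. Since $g_d$ inherits individual degree at most $q-1$ from $g$, Fact \ref{thm:SchwarzZippel} yields $g_d \equiv 0$, contradicting the definition of $d$. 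Therefore the assumption is untenable and $|K| \geq N_q(n,m)/\binom{m+n-1}{n}$.

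I expect the subtle point — the one most easily mishandled — to be the passage from ``$g$ vanishes to multiplicity $m$ on $K$'' to a genuine contradiction: one must apply Fact \ref{thm:SchwarzZippel} to the \emph{leading homogeneous form} $g_d$ rather than to $g$ itself, since $g$ need not vanish on all of $\mathbb{F}_q^n$, and it is only the top-degree coefficient of each restriction that is pinned down by every direction simultaneously. Everything else is a matter of matching the constraint count in Lemma \ref{thm:Constraints} against the multiplicity-counted root bound in Lemma \ref{thm:Univariate}, which is routine.
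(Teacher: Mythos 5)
Your proof is correct and is essentially the argument this paper relies on: the paper itself only cites Theorem~\ref{thm:2008saraf} from \cite{Saraf2008Improved}, but its own proof of the fractional generalization (Lemma~\ref{th:fractional}) is exactly your argument with two multiplicity levels instead of one — interpolate via Lemma~\ref{thm:Constraints}, kill each restriction $g_{\mathbf{a},\mathbf{b}}$ by root-counting with Lemma~\ref{thm:Univariate}, and apply Fact~\ref{thm:SchwarzZippel} to the leading homogeneous form. Your emphasis on applying the vanishing fact to $g_d$ rather than $g$ is precisely the step the paper also highlights (there written $g = g_0 + g_1$ with $g_0$ the top form).
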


By setting $n=3$ and $m=2$, it is concluded in \cite{Saraf2008Improved} that for a Kakeya set $K \subseteq \mathbb{F}_q^n$, $|K|\geq \frac{5}{24}q^3\approx 0.2083q^3$. We manage to obtain our strengthened lower bound by allowing $m$ to take values that are not necessarily integers. In particular, we introduce a notion of vanishing with fractional multiplicity and show that it can be used for an improved bound. 

% We note at the end of the proof of Theorem \ref{th:Kakeya} that our tweak recovers the bound of Theorem \ref{thm:2008saraf} for some integer value of $m$. 
\subsection{Proof of Theorem \ref{th:Kakeya}}
Let $K \subseteq \mathbb{F}_q^3$ be a Kakeya set. As a first step in the proof, we will interpolate a nonzero polynomial vanishing on the points of $K$ with some possibly fractional multiplicity $m$. If we wanted to interpolate a polynomial vanishing with multiplicity $m$ where $m$ is sandwiched between two positive integers $u$ and $u+1$, one way to do this could be that independently for each point we could make it vanish with multiplicity $u$ with some probability, say $\alpha$, and with multiplicity $u+1$ with probability $1-\alpha$, so that in expectation the multiplicity of vanishing would be at least $m$. 
It turns out that the main property of the multiplicities of vanishing we will need is that on each {\it line} of the Kakeya set, almost the correct ($\alpha$) fraction of points have multiplicity of vanishing being at least $u$ and the rest have multiplicity of vanishing at least $u+1$. To do this we will first identify an appropriate subset $S$ of the Kakeya set on which we will want the vanishing multiplicity to be $u$, and in the lemma below we show that such a set can be suitably picked. 

%For a fraction $\alpha$ we will choose a subset $S\subseteq K$ that has size
%about $\alpha |K|$ and each line of $K$ has about an $\alpha$ fraction of points
%in $S$. We will later think of this set $S$ as points vanishing with
%multiplicity $u$ in a polynomial.
\begin{lemma}\label{th:Chernoff}
Let $K \subseteq \mathbb{F}_q^3$ be a Kakeya set. Let $0 \leq\alpha \leq 1$, and $\delta=\frac{1}{\sqrt[3]{q}}$. Then there exists a constant $C>0$ such that for $q>C$ 
we can pick a subset $S\subset K$ such that $||S|-\alpha|K|| < \delta \alpha |K|$,
and such that for each line $L$ contained in $|K|$, $||L\cap S|-\alpha q| <  \delta
\alpha q$.
\end{lemma}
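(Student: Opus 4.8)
The plan is to build $S$ by an elementary random process and then use concentration inequalities to control simultaneously the global size $|S|$ and the intersection $|L\cap S|$ over every line $L\subseteq K$. Concretely, I would include each point of $K$ in $S$ independently with probability $\alpha$ (treating $\alpha$ as a fixed constant in $(0,1]$; the degenerate case $\alpha=0$ is vacuous). Then $|S|$ is a sum of $|K|$ independent $\{0,1\}$ indicators with $\mathbb{E}[|S|]=\alpha|K|$, and for each line $L\subseteq K$, since $L$ consists of exactly $q$ points, $|L\cap S|$ is a sum of $q$ independent indicators with $\mathbb{E}[|L\cap S|]=\alpha q$. The two required inequalities are exactly the statements that $|S|$ and each $|L\cap S|$ stay within a $\delta$-relative-error window of their means.

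Next I would invoke the multiplicative Chernoff bound, $\Pr[|X-\mu|\geq\delta\mu]\leq 2\exp(-\delta^2\mu/3)$ for a sum $X$ of independent indicators with mean $\mu$ and $0<\delta<1$. For the global count, $\mu=\alpha|K|$; since a Kakeya set contains at least one full line we have $|K|\geq q$, so with $\delta=q^{-1/3}$ the failure probability is at most $2\exp(-\delta^2\alpha|K|/3)\leq 2\exp(-\alpha q^{1/3}/3)$. For a single line, $\mu=\alpha q$ and $\delta^2\mu=\alpha q^{1/3}$, so the probability that $|L\cap S|$ deviates from $\alpha q$ by at least $\delta\alpha q$ is likewise at most $2\exp(-\alpha q^{1/3}/3)$.

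Then I would take a union bound over all lines contained in $K$. The number of lines in $\mathbb{F}_q^3$ is $(q^2+q+1)q^2=\Theta(q^4)$, so there are at most $O(q^4)$ lines to control. Hence the probability that either the global bound or some line bound fails is at most $O(q^4)\cdot\exp(-\alpha q^{1/3}/3)$, which is strictly less than $1$ once $q$ exceeds a constant $C$ (depending on $\alpha$). By the probabilistic method, some fixed choice of $S$ satisfies all the inequalities simultaneously, proving the lemma.

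The one point that really needs care—and the reason behind the specific choice $\delta=q^{-1/3}$—is the balance inside the union bound: the per-line failure probability decays like $\exp(-\delta^2\alpha q)$, while there are polynomially many (about $q^4$) lines. For the union bound to close we need $\delta^2 q$ to grow faster than $\log(q^4)=O(\log q)$, and $\delta=q^{-1/3}$ gives $\delta^2 q=q^{1/3}\gg\log q$, so the exponential decay comfortably dominates the polynomial count. A substantially smaller $\delta$ (e.g.\ $q^{-1/2}$, giving $\delta^2 q=O(1)$) would make the per-line concentration too weak for the union bound to succeed, while a larger $\delta$ would only weaken the conclusion. The only remaining bookkeeping is harmless: treating the strict inequalities and assuming $\alpha$ is bounded away from $0$ so that $\alpha q^{1/3}\to\infty$.
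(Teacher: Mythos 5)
Your proposal is correct and follows essentially the same argument as the paper: include each point of $K$ independently with probability $\alpha$, apply the Chernoff bound to $|S|$ (using $|K|\geq q$) and to each $|L\cap S|$, then union bound over the roughly $q^4$ lines and invoke the probabilistic method, with the same observation that $\delta^2 q = q^{1/3}$ beats the logarithm of the line count. The only cosmetic differences (the explicit factor of $2$ in the two-sided Chernoff bound and the remark about $\alpha$ bounded away from $0$) do not change the argument.
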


\begin{proof}
Consider a random subset $S\subset K$, where we choose each point in $S$
independently with probability $\alpha$. By the Chernoff Bound,
$\mathbb{P}[||S|-\alpha |K||\geq\delta\alpha |K|] \leq \exp(-\frac{\alpha
|K|\delta^2}{3})$. Since $|K|$ is certainly larger than $q$,  $\exp(-\frac{\alpha
|K|\delta^2}{3})\leq \exp(-\frac{\alpha q\delta^2}{3})$. 

Note also that there are only $q^4+q^3+q^2$ distinct lines in $\mathbb{F}_q^3$, and thus at most
$q^4+q^3+q^2$ lines in $K$. Let $L$ be any line in $K$. 
Again, via the Chernoff Bound, we have $\mathbb{P}[||L\cap S|-\alpha
q| \geq \delta\alpha q]\leq \exp(-\frac{\alpha q\delta^2}{3})$. By the union bound, the probability that any one of the
lines in $K$ has more than $(1+\alpha\delta)q$ or fewer than $(1-\alpha\delta)q$ points
in $S$ is at most $(q^4+q^3+q^2)\exp(-\frac{\alpha q\delta^2}{3})$.\\ 
Thus if we show that
$\exp(-\frac{\alpha q\delta^2}{3})+(q^4+q^3+q^2)\exp(-\frac{\alpha
q\delta^2}{3})<1$, then by the probabilistic method, such a subset $S$ with the desired properties exists. Since
$\lim\limits_{q\to\infty}\exp({-\frac{\alpha q\delta^2}{3}}
)+(q^4+q^3+q^2)\exp(-\frac{\alpha q\delta^2}{3})=0$ for the appropriately chosen
$\delta$,  there exists some constant $C>0$ such that for $q>C$, there exists such a set $S$.
\end{proof}

\begin{lemma}\label{th:fractional}
Let $K \subseteq \mathbb{F}_q^3$ be a Kakeya set. Let $u\in\{1,2\}$, let $\alpha$ be such that $0\leq\alpha\leq 1$, $\delta= \frac{1}{\sqrt[3]{q}}$ and
$m=(\alpha-\delta\alpha) u+(1-\alpha-\delta\alpha)(u+1).$ Then $$N_q(3,m)\leq
(\alpha+\delta\alpha){2+u\choose 3}|K|+(1-\alpha+\delta\alpha){3+u\choose
3}|K|.$$   
\end{lemma}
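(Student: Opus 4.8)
\textbf{Proof proposal for Lemma~\ref{th:fractional}.}

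The plan is to combine the interpolation existence result of Lemma~\ref{thm:Constraints} with the counting Lemma~\ref{thm:CountingNqnm} and the set-selection guarantee of Lemma~\ref{th:Chernoff}, deriving the claimed inequality by contradiction. First I would apply Lemma~\ref{th:Chernoff} with the given $\alpha$ and $\delta = 1/\sqrt[3]{q}$ to obtain a subset $S \subseteq K$ satisfying $||S| - \alpha|K|| < \delta\alpha|K|$ and the per-line control $||L \cap S| - \alpha q| < \delta\alpha q$ for every line $L \subseteq K$; here $q$ is taken larger than the constant $C$ from that lemma. Set $S_1 = S$ and $S_2 = K \setminus S$. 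These are disjoint, and from the size estimate we get the bounds $|S_1| < (\alpha + \delta\alpha)|K|$ and $|S_2| < (1 - \alpha + \delta\alpha)|K|$ (the latter since $|S_1| > (\alpha - \delta\alpha)|K|$ forces $|S_2| < (1-\alpha+\delta\alpha)|K|$).

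Next I would argue by contradiction: suppose the claimed inequality fails, i.e.\ suppose
$$N_q(3,m) > (\alpha+\delta\alpha)\binom{2+u}{3}|K| + (1-\alpha+\delta\alpha)\binom{3+u}{3}|K|.$$
Combining with the size bounds on $S_1, S_2$, and using that vanishing to multiplicity $u$ in $n=3$ variables imposes $\binom{u+n-1}{n} = \binom{u+2}{3}$ constraints (and multiplicity $u+1$ imposes $\binom{u+3}{3}$ constraints), this gives
$$|S_1|\binom{m_1+2}{3} + |S_2|\binom{m_2+2}{3} < N_q(3,m)$$
with $m_1 = u$ and $m_2 = u+1$. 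Lemma~\ref{thm:Constraints} (with $n=3$) then produces a nonzero polynomial $g$ of total degree less than $mq$ and individual degree at most $q-1$ that vanishes to multiplicity $u$ on each point of $S_1$ and to multiplicity $u+1$ on each point of $S_2$.

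The heart of the argument, and the step I expect to be the main obstacle, is then to show this $g$ must vanish everywhere, contradicting its low degree via Fact~\ref{thm:SchwarzZippel}. For this I would use the Kakeya property: for each direction $x \in \mathbb{F}_q^3$ there is a line $L \subseteq K$ in that direction. Restricting $g$ to such a line $L$ via $g_{\mathbf a,\mathbf b}(t)$ and invoking Lemma~\ref{thm:Univariate}, the univariate restriction vanishes to multiplicity $u$ at each of the $|L \cap S_1|$ points and to multiplicity $u+1$ at each of the $|L \cap S_2|$ points. By the per-line Chernoff control, the total multiplicity along this line is at least
$$(\alpha - \delta\alpha)q \cdot u + (1-\alpha-\delta\alpha)q\cdot(u+1) = mq,$$
which strictly exceeds the degree of $g_{\mathbf a,\mathbf b}$ (less than $mq$), forcing $g_{\mathbf a,\mathbf b} \equiv 0$. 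The delicate point is matching the definition of $m$ precisely against the degree bound: $m$ is defined using the \emph{lower} deviation $(\alpha-\delta\alpha)$ and $(1-\alpha-\delta\alpha)$ exactly so that the guaranteed per-line multiplicity equals $mq$, while the constraint-counting uses the \emph{upper} deviations $(\alpha+\delta\alpha)$ and $(1-\alpha+\delta\alpha)$; keeping these two roles of $\delta$ straight is where care is needed. Since the highest-degree homogeneous part of $g$ would then have to vanish in every direction, a standard argument (as in Dvir's and Saraf--Sudan's proofs) shows this top part, viewed as a polynomial of individual degree $<q$, vanishes on all of $\mathbb{F}_q^3$ and hence is identically zero by Fact~\ref{thm:SchwarzZippel}, contradicting $g \neq 0$. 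This contradiction establishes the desired inequality.
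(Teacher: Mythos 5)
Your proposal is correct and takes essentially the same route as the paper's own proof: contradiction, the Chernoff-selected set $S$ from Lemma~\ref{th:Chernoff} paired with $K\setminus S$ in Lemma~\ref{thm:Constraints}, restriction of $g$ to a line in each direction so that the per-line multiplicity count of at least $mq$ (via Lemma~\ref{thm:Univariate}) forces $g_{\mathbf{a},\mathbf{b}}\equiv 0$, and finally the vanishing of the top homogeneous part of $g$ in every direction, contradicting Fact~\ref{thm:SchwarzZippel}. Your observation about the asymmetric roles of $\delta$ (lower deviations for the line count, upper deviations for the constraint count) is exactly the bookkeeping the paper's proof relies on.
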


\begin{proof}
Suppose for contradiction, $N_q(3,m)> (\alpha+\delta\alpha){2+u\choose
3}|K|+(1-\alpha+\delta\alpha){3+u\choose 3}|K|$. By Lemma \ref{th:Chernoff},
choose $S$ such that each line in $K$ has between $\alpha q-\delta\alpha q$ and
$\alpha q+ \delta\alpha q$ points in $S$ and $||S|-\alpha|K||<\delta \alpha
|K|$.   
In particular $|S|<(\alpha+\delta\alpha)|K|$ and $|K\setminus
S|<(1-\alpha+\delta\alpha)|K|$.
Then by Lemma~\ref{thm:Constraints} there exists a nonzero polynomial
$g\in\mathbb{F}_q[x_1,x_2,x_3]$ with total degree less than $mq$ and individual
degree less than $q$ such that $g$ vanishes on $S$ with multiplicity at least $u$ and on
$K\setminus S$ with multiplicity at least $u+1$. Let $d$ denote the degree of $g$.  Let
$g=g_0+g_1$, where $g_0$ denotes the homogeneous part of degree $d$ and $g_1$
the part with degree less than $d$.  Note that $g_0$ also has degree at most
$q-1$ in each of its variables.
\\Now fix a ``direction'' $\textbf{b}\in \mathbb{F}_q^3$.  Since $K$ is a Kakeya
set, there exists $\textbf{a}\in \mathbb{F}_q^3$ such that the line
$\textbf{a}+t\textbf{b}\in K$ for all $t\in\mathbb{F}_q$.  So consider
$g_{a,b}(t)$, the univariate polynomial of $g$ restricted to the line
$\textbf{a}+t\textbf{b}$.
By Lemma~\ref{th:Chernoff} and Lemma~\ref{thm:Univariate}, there are at least $(1-\delta)\alpha q$ choices of $t$ where $g_{a,b}$
vanishes with multiplicity at least $u$ and there are at least $q -\alpha q-\delta\alpha
q$ choices of $t$, where $g_{a,b}$ vanishes with multiplicity at least $u+1$.  So in total, $g_{a,b}$ has at least $(\alpha-\delta\alpha)uq+(1-\alpha-\delta\alpha)(u+1)q=mq$ zeros, which is more
than its degree.  Therefore, $g_{a,b}$ must be identically zero.  In particular,
its leading coefficient must be 0.  Since this leading coefficient equals
$g_0(\textbf{b})$, $g_0(\textbf{b})=0$. Since $b$ was chosen arbitrarily, this must happen for all $\textbf{b}\in \mathbb{F}_q^3$. 
However, by Fact~\ref{thm:SchwarzZippel}, this contradicts the fact that $g_0$ is a nonzero polynomial of degree at most $q-1$ in each of its variables.
\end{proof}

\begin{proof}[Proof of Theorem \ref{th:Kakeya}]
Let $\delta= \frac{1}{\sqrt[3]{q}}$, let $u\in\{1,2\}$, let $\alpha$ be such that $0\leq\alpha\leq 1$, and
$m=(\alpha-\delta\alpha) u+(1-\alpha-\delta\alpha)(u+1).$  Note that once we set the value for $u$ and $m$ between $1$ and $2$, this will determine a value for $\alpha$. For now suppose we have chosen some values for $u$, $\alpha$ and $m$.  

By Lemma
\ref{th:fractional}, $|K|\geq \frac{N_q(3,m)}{(\alpha+\delta\alpha){2+u\choose
3}+(1-\alpha+\delta\alpha){3+u\choose 3}}$.  Since we are considering $|K|$ as
$q$ grows asymptotically, we only need to consider the leading term when
$N_q(3,m)$ is expressed as a polynomial in $q$.  Also, note that $\delta$ becomes small as
$q$ grows large.    

The reason we only let $u$ take value $1$ or $2$ is the following. Since we only care about polynomials with individual variable degree less than $q$, the total degree must be less than $3q$. Choosing a value of $m$ that is greater than or equal to $3$ will just end up being somewhat redundant and end up giving a worse bound. Thus we only consider $m < 3$. Given the relationship between $u$ and $m$ and given that $u$ needs to be an integer, the only choices for $u$ are hence $1$ or $2$ as in the statement of the above lemma. 

When $u=1$, this makes $m=2-(1+o(1))\alpha$ for large $q$.  By Lemma \ref{thm:CountingNqnm},
$$N_q(3,m)=\left(\frac{-2m^3+9m^2-9m+3}{6}+o(1)\right)q^3.$$ Substituting $u=1$, by Lemma \ref{th:fractional} we get that
$$|K|\geq\left(\frac{-2m^3+9m^2-9m+3}{6(4-3\alpha)}+o(1)\right)q^3=\left(\frac{
-2m^3+9m^2-9m+3}{6(3m-2)}+o(1)\right)q^3.$$  We maximize this for $1\leq
m\leq 2$.  For m=1.84, this gives $|K|\geq (0.21076+o(1))q^3.$
When $u=2$, the best lower bound achieved in this case is $|K|\geq (.2083+o(1))q^3$. Thus overall the best 
lower bound we achieve is $(0.21076+o(1))q^3$.
\end{proof}

%We believe that the true lower bound for the smallest Kakeya set matches the
%construction of \cite{Saraf2008Improved}, namely $$|K|\geq
%\frac{q^n}{2^{n-1}}.$$

\section{Nikodym sets in 3 dimensions and the union of
lines}\label{sec:3DNikodym}

In this section, we investigate  Nikodym sets in $\mathbb{F}_q^3$ and give improved lower bounds.

We will find it easier to work with the complement of a Nikodym set rather than the Nikodym set itself.
We define
$$f(n,q) = \text{ the maximum size of the complement of a Nikodym set in }\mathbb{F}_q^n.$$
We additionally denote the complement of a set $\mathcal{N}$ by $\mathcal{N}^c$.

Using this notation, Conjecture \ref{conj:o1} states that $f(n,q) = o(q^n)$, and Theorem \ref{th:3DNikodym}  states that $f(3,q) \leq (0.62 + o(1))q^3$.

In Section \ref{sec:3Dproof}, we prove Theorem \ref{th:3DNikodym}.

In Section \ref{sec:unionOfLines}, we show that the proof of Theorem
\ref{th:3DNikodym} given in Section \ref{sec:3Dproof} immediately implies a
lower bound on the number of points incident to a large set of lines, and that
this bound is nearly tight.
This implies that any substantial improvement to Theorem \ref{th:3DNikodym} will
need to use some property of Nikodym sets that is not exploited by the proof
given in Section \ref{sec:3Dproof}.

In Section \ref{sec:coplanarLines}, we observe the set of lines associated to the complement of a Nikodym set has the
property that not too many of the lines given by its definition can lie in any
single plane.
We further suggest that exploiting this property might lead to a proof of
Conjecture \ref{conj:o1} in the three dimensional case.
In particular, we show that a proof of Conjecture \ref{conj:weakUnionOfLines} would
immediately imply the case $n=3$ of Conjecture \ref{conj:o1}.

In Section \ref{sec:hermitian} we describe a near-counterexample to Conjecture \ref{conj:unionOfLines}.

\subsection{Proof of Theorem \ref{th:3DNikodym}} \label{sec:3Dproof}

Our bound on $f(3,q)$ will use a bound on the number of incidences
between points and lines.
The bound we will use was essentially proved by Lund and Saraf in \cite{lund2014incidence},
but is not explicitly stated there; a similar bound was obtained by Bennett, Iosevich, and Pakianathan \cite{bennett2014three}.
We show how to recover the bound from arguments given in
\cite{lund2014incidence}.

Given a set $P$ of points and a set $L$ of lines, we denote the number of
incidences between $P$ and $L$ as
\[I(P,L) = |\{(p,\ell) \in P \times L \mid p \in \ell \} |.\]

\begin{theorem}\label{th:incidenceBound}
Let $L$ be a set of lines and $P$ a set of points in $\mathbb{F}_q^3$.
Then,
\[I(P,L) \leq (1 + o(1)) \left ( |P||L|q^{-2} + q
\sqrt{|P||L|(1-|P|q^{-3})(1-|L|q^{-4})} \right ).\]
\end{theorem}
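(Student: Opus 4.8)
The plan is to deduce this bound from the expander mixing lemma applied to the full point--line incidence graph of $\mathbb{F}_q^3$. The shape of the claimed inequality---a main term $|P||L|q^{-2}$ equal to the ``expected'' incidence count, plus a correction scaling like $q\sqrt{|P||L|}$ weighted by the characteristic factors $(1-|P|q^{-3})(1-|L|q^{-4})$---is exactly what that lemma produces, so the whole task reduces to identifying the relevant biregular bipartite graph and computing one singular value. First I would set up the biadjacency matrix $N$ whose rows are indexed by all $q^3$ points of $\mathbb{F}_q^3$ and whose columns are indexed by all $q^4+q^3+q^2$ lines, with entry $1$ exactly when the point lies on the line. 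This graph is biregular: each point lies on $d_X=q^2+q+1$ lines and each line contains $d_Y=q$ points, and $I(P,L)=\chi_P^{\mathsf T} N \chi_L$ for the indicator vectors $\chi_P,\chi_L$.

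The key computation is the spectrum of $N$. Using that through any single point there pass $q^2+q+1$ lines and through any two distinct points there passes exactly one line, one obtains the clean identity $NN^{\mathsf T}=(q^2+q)I+J$, where $J$ is the all-ones matrix. Since $J$ has eigenvalue $q^3$ on the all-ones vector $\mathbf 1_{\mathcal P}$ and $0$ on its orthocomplement, the singular values of $N$ are $\sigma_1=\sqrt{q^3+q^2+q}$, attained on $\mathbf 1_{\mathcal P}$ (which satisfies $N\mathbf 1_{\mathcal L}=d_X\mathbf 1_{\mathcal P}$ and $N^{\mathsf T}\mathbf 1_{\mathcal P}=d_Y\mathbf 1_{\mathcal L}$), and $\sigma_2=\sqrt{q^2+q}=(1+o(1))q$ on the orthogonal complement.

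Then I would run the standard expander mixing argument. Decompose $\chi_P=\tfrac{|P|}{q^3}\mathbf 1_{\mathcal P}+\chi_P^\perp$ and $\chi_L=\tfrac{|L|}{|Y|}\mathbf 1_{\mathcal L}+\chi_L^\perp$ with $\chi_P^\perp\perp\mathbf 1_{\mathcal P}$ and $\chi_L^\perp\perp\mathbf 1_{\mathcal L}$, where $|Y|=q^4+q^3+q^2$. Expanding $\chi_P^{\mathsf T}N\chi_L$, the all-ones$\times$all-ones term contributes exactly $\tfrac{d_X|P||L|}{|Y|}=|P||L|q^{-2}$, because $d_X/|Y|=(q^2+q+1)/\bigl(q^2(q^2+q+1)\bigr)=q^{-2}$; the two cross terms vanish because $\mathbf 1$ is a singular vector of $N$; and the final term is bounded in absolute value by $\sigma_2\|\chi_P^\perp\|\,\|\chi_L^\perp\|=\sigma_2\sqrt{|P||L|(1-|P|/q^3)(1-|L|/|Y|)}$, using $\|\chi_P^\perp\|^2=|P|(1-|P|/q^3)$ and the analogous identity for $L$. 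Substituting $\sigma_2=(1+o(1))q$ and $|Y|=(1+o(1))q^4$ yields the stated bound with its $(1+o(1))$ prefactor.

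The hard part is purely the bookkeeping, not the idea. The two things to verify carefully are that $\sigma_2$ is genuinely $(1+o(1))q$ and no larger (which follows immediately from the identity $NN^{\mathsf T}=(q^2+q)I+J$, the one place where the geometry of lines in $\mathbb{F}_q^3$ enters), and that the factor $1-|L|/(q^4+q^3+q^2)$ can be absorbed into $(1+o(1))\bigl(1-|L|q^{-4}\bigr)$. The latter is routine in the regime where the statement carries content, namely $|P|\le q^3$ and $|L|\lesssim q^4$; outside this range the right-hand side is either vacuous or has an imaginary second term, so no separate treatment is needed.
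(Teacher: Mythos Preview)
Your proposal is correct and follows essentially the same approach as the paper: both apply the bipartite expander mixing lemma to the full point--line incidence graph of $\mathbb{F}_q^3$, identifying the top two eigenvalues/singular values as $(1+o(1))q^{3/2}$ and $(1+o(1))q$. Your version is in fact slightly more self-contained, since you compute the spectrum directly via the identity $NN^{\mathsf T}=(q^2+q)I+J$, whereas the paper cites \cite{lund2014incidence} for the eigenvalue computation and quotes the expander mixing lemma in the equivalent adjacency-matrix formulation.
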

\begin{proof}

A $(d_U, d_V)$-biregular graph $G$ is a bipartite graph such that each each left
vertex has degree $d_U$ and each right vertex has degree $d_V$.
We denote by $e(G)$ the number of edges in a graph $G$, and by $G(S,T)$ the
number of edges between two subsets $S,T$ of the vertices of a graph.
We will use the expander mixing lemma \cite{alon1988explicit}, specifically the following 
bipartite version.
A proof of Lemma \ref{th:expanderMixing} is given in \cite{lund2014incidence}, and an equivalent result was proved much earlier by Haemers, e.g. \cite{haemers1995interlacing}.
\begin{lemma}[Bipartite expander mixing lemma, \cite{lund2014incidence}]\label{th:expanderMixing}
Let $G$ be a $(d_U, d_V)$-biregular graph with left vertices $U$ and right
vertices $V$.
Let $A$ be the (square) adjacency matrix of $G$, and let $\lambda_1 \geq
\lambda_2 \geq \ldots \geq \lambda_{|U|+|V|}$ be the eigenvalues of $A$.
Let $\lambda = \lambda_2 / \lambda_1$.
Let $S \subseteq U$ with $|S| = \alpha |U|$ and let $T \subseteq V$ with $|T| =
\beta|V|$.
Then,
\[\left | \frac{e(S,T)}{e(G)} - \alpha \beta \right | \leq \lambda \sqrt{\alpha
\beta (1- \alpha)(1- \beta)}.\]
\end{lemma}

Construct a bipartite graph $G$ with left vertices $U$ being the points of
$\mathbb{F}_q^3 $, and right vertices $V$ being the lines of $\mathbb{F}_q^3$, with
$(p,\ell)$ in the edge set of $G$ if and only if $p \in \ell$.
The number of points in $\mathbb{F}_q^3$ is $|U| = q^3$; the number of lines is
$|V| = (1+o(1))q^4$; and the number of incidences between points and lines in
$\mathbb{F}_q^3$ is $e(G) = (1+o(1))q^5$.
It is shown in Section 4 of \cite{lund2014incidence} that the largest eigenvalue
of this graph is $(1+o(1))q^{3/2}$, and the second largest eigenvalue is $(1 +
o(1))q$.
We are interested in the number of incidences between a set $P \subseteq U$ and $L \subseteq V$.
This is exactly the number of edges between $P$ and $L$ in $G$, and hence we apply Lemma \ref{th:expanderMixing} with $\alpha = |P|q^{-3}$ (which is the density of $P$ in $U$) and $\beta = (1-o(1)) |L| q^{-4}$ (which is the density of $L$ in $V$), to get that

$$
\left | (1+o(1))(I(P,L)q^{-5} -|L||P| q^{-7}) \right | \leq (1+o(1))q^{-4}
\sqrt{|P||L| (1- |P|q^{-3})(1- |L|q^{-4})}.$$
Thus, simplifying we get 
$$I(P,L) \leq (1+o(1))  \left ( |P||L|q^{-2} + q
\sqrt{|P||L|(1-|P|q^{-3})(1-|L|q^{-4})} \right ).$$

\end{proof}

Now, we complete the proof of Theorem \ref{th:3DNikodym}.

\begin{proof}[Proof of Theorem \ref{th:3DNikodym}]
Suppose that $\cN$ is the complement of a Nikodym set in $\mathbb{F}_q^3$.
Let $L$ be a set of $|\cN|$ lines such that each line has exactly one point in
common with $\cN$, and there is exactly one line of $L$ through each point of
$\cN$; the existence of such a set is guaranteed by the definition of a 
Nikodym set.
Let $P = \N$; by definition, $|P| = q^3 - |L|$.
Then each line of $L$ is incident to exactly $q-1$ points of $P$, so $I(P,L) =
(q-1)|L|$.
Applying Theorem \ref{th:incidenceBound}, we get that

$$
(q-1)|L| \leq (1+o(1)) \left ( (q^3-|L|) |L| q^{-2} + q \sqrt{(q^3 - |L|)
|L|(|L|q^{-3})} \right ).$$

Substituting $|L| = cq^3$ in the above expression, we have
$$
cq^4 - cq^3 \leq (1+o(1)) \left ( (1-c)cq^{4} + (1-c)^{1/2}c q^4 \right).
$$
Rearranging and simplifying,
$$
c^3 + c^4 \leq (1+o(1)) c^2.
$$

Solving for $c$, we find that 
$$c \leq (\sqrt{5}-1)/2+ o(1) \leq (1+o(1)) 0.62.$$

The result of this calculation can easily be checked by setting $|L|/q^3$ to be any constant greater than $0.62$ in the first inequality, which yields a contradiction for $q$ sufficiently large.
\end{proof}

\subsection{The union of lines}\label{sec:unionOfLines}

The proof of Theorem \ref{th:3DNikodym} only uses the fact that the definition
of a Nikodym set $N$ guarantees the existence of $|\N^c|$ distinct lines, each of
which are incident to at least $q-1$ points of $\N$.
While we do not believe that Theorem \ref{th:3DNikodym} is anywhere near tight, the same proof
gives a nearly tight lower bound on the size of the union of any set of at least $0.62 q^3$
lines.

Recall from the introduction that, for any set $L$ of lines,
$$ P(L) = \bigcup_{\ell \in L}\{p \mid p \in \ell\}.$$

\begin{proposition}\label{th:generalUnionOfLines}
If $L$ is a set of $0.62 q^3$ lines in $\mathbb{F}_q^3$, then $|P(L)| \geq (1-o(1))0.38 q^3$.
\end{proposition}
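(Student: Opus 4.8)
The plan is to run the argument behind Theorem \ref{th:3DNikodym} essentially verbatim, but taking the point set to be the entire union $P(L)$ rather than the complement of a Nikodym set. Write $P = P(L)$ and abbreviate $|P| = cq^3$, where $c \in (0,1]$ is exactly the quantity we wish to bound from below. The first step is to count incidences exactly: every line of $L$ has precisely $q$ points, and by the definition of $P(L)$ all of them lie in $P$, so $I(P,L) = q|L|$ (there is no overcounting, as incidences are counted as pairs).

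Next I would feed $P$ and $L$ into Theorem \ref{th:incidenceBound}. Since $|L| = 0.62q^3$, we have $|L|q^{-4} = 0.62 q^{-1} = o(1)$, so the factor $(1 - |L|q^{-4})$ is absorbed into the $(1+o(1))$. Dividing the resulting inequality $q|L| \le (1+o(1))\bigl(|P||L|q^{-2} + q\sqrt{|P||L|(1-|P|q^{-3})(1-|L|q^{-4})}\bigr)$ through by $q|L|$ and substituting $|P| = cq^3$, $|L| = 0.62q^3$ makes the first term equal to $c$ and the second equal to $\sqrt{c(1-c)/0.62}$, collapsing everything to the scalar inequality
$$1 \le (1+o(1))\left(c + \sqrt{\frac{c(1-c)}{0.62}}\right).$$

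Finally I would solve this for $c$. Setting $g(c) = c + \sqrt{c(1-c)/0.62}$, one checks that $g(0)=0$, $g(1)=1$, and $g$ rises strictly above $1$ in between (for instance $g(1/2) = 1/2 + \sqrt{0.25/0.62} > 1$), so $g$ increases past $1$, peaks, and returns to $1$ at $c=1$; consequently the feasible region $\{c : g(c) \ge 1\}$ is exactly $[c_0, 1]$, where $c_0 < 1$ is the smaller root of $g(c)=1$. Isolating the square root, squaring, and cancelling the strictly positive common factor $(1-c)$ reduces $g(c)=1$ to the linear equation $0.62(1-c) = c$, giving $c_0 = 0.62/1.62 = 0.3827\ldots > 0.38$. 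The inequality $g(c) \ge 1 - o(1)$ therefore forces $c \ge c_0 - o(1)$, i.e. $|P(L)| \ge (0.38 - o(1))q^3$, as claimed.

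This presents essentially no obstacle beyond routine algebra, as it is a direct transcription of the proof of Theorem \ref{th:3DNikodym}. The only point requiring genuine care is the last step: one must verify that the inequality pins $c$ onto the correct (larger-root) branch $[c_0,1]$ rather than merely yielding the vacuous bound $c \le 1$, and that the factor cancelled after squaring is indeed the positive quantity $(1-c)$ rather than a sign-ambiguous one.
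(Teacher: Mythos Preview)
Your argument is correct and follows essentially the same route as the paper: count incidences as $q|L|$, apply Theorem~\ref{th:incidenceBound}, and simplify. You supply more algebraic detail than the paper (which just writes ``simplifying as before''), including the useful check that the inequality forces $c$ onto the branch $[c_0,1]$ rather than the vacuous one.
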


\begin{proof}
Since each point on any line in $L$ is contained in $P = P(L)$, the number of incidences between $L$ and $P$ is $q|L| = 0.62 q^4$.
Applying Theorem \ref{th:incidenceBound},
\begin{align*}
0.62 q^4 & \leq (1+o(1)) (0.62 |P| q + q \sqrt{0.62 |P| q^3 (1 - |P| q^{-3})}), \text{ so, simplifying as before,}\\ 
|P| &> (1 - o(1)) 0.38 q^3.
\end{align*}
\end{proof}

We now show that without any further condition on the set of lines, Proposition \ref{th:generalUnionOfLines} is nearly tight.

\begin{proposition}\label{th:generalUnionOfLinesIsTight}
There is a set $L$ of $(1-o(1))0.62 q^3$ lines in $\mathbb{F}_q^3$ such that $|P(L)| < 0.43 q^3$.
\end{proposition}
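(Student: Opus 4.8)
The plan is to construct the set $L$ explicitly by packing lines into a union of parallel planes, so that within each plane the lines cover only a modest fraction of the points, while still using up the full budget of $(1-o(1))0.62q^3$ lines. The key observation is that Proposition \ref{th:generalUnionOfLines} is forced to be pessimistic: it treats the lines as if spread out incidence-optimally across all of $\mathbb{F}_q^3$, whereas if we concentrate many lines inside a bounded number of planes, the incidences pile up on the same points and the union $P(L)$ shrinks. So the idea is to exploit exactly the clustering that the incidence-bound proof cannot rule out.

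Concretely, I would foliate $\mathbb{F}_q^3$ into the $q$ parallel planes $\pi_c = \{x_3 = c\}$ for $c \in \mathbb{F}_q$, each a copy of $\mathbb{F}_q^2$ with $q^2$ points. In a single plane $\mathbb{F}_q^2$ one can take a pencil (or a union of a few pencils) of lines through a common point, or better, take all lines in a bounded number of parallel classes (directions) within that plane. A set of lines in $k$ fixed directions in $\mathbb{F}_q^2$ consists of $kq$ lines but covers only the $q^2$ points of the plane — and in fact a carefully chosen family covers strictly fewer than all $q^2$ points if we omit some. The cleanest version: within each plane put roughly $0.62q$ lines, all chosen from a small fixed set of directions, so that their union is a set of parallel strips covering at most (say) $0.43q^2$ of the $q^2$ points of that plane. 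Summing over the $q$ planes gives $|L| = (1-o(1))0.62q^3$ lines and $|P(L)| \le (1-o(1))0.43q^3$ points, as required.

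The first step is therefore a purely two-dimensional claim: in $\mathbb{F}_q^2$ there is a set of $(1-o(1))0.62q$ lines whose union has at most $0.43q^2$ points. I would obtain this by fixing a constant number $d$ of directions (parallel classes), and in each direction selecting a fraction $\rho$ of the $q$ parallel lines, chosen so their union is as small as possible; lines in $d$ generic directions through a common sub-grid can be made to reuse points heavily. I would then verify by a direct count that the number of lines, $\approx d\rho q$, can be tuned to $0.62q$ while the covered-point count stays below $0.43q^2$. The main obstacle is precisely this counting optimization: I must check that some achievable pair (number of lines, number of covered points) simultaneously satisfies $(\text{lines}) = (1-o(1))0.62q$ and $(\text{covered points}) < 0.43q^2$, which requires a good two-dimensional construction rather than a naive pencil. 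After that, the lift to three dimensions by taking the same construction in each of the $q$ parallel planes is routine, since lines lying in distinct parallel planes are disjoint and the counts simply add.

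I expect the delicate point to be the quantitative gap between $0.38$ and $0.43$: the lower bound of Proposition \ref{th:generalUnionOfLines} gives $0.38q^3$, and the construction must land below $0.43q^3$, so there is only a small window in which the two-dimensional packing has to perform. If a single clean family of lines in $\mathbb{F}_q^2$ does not quite reach a union of size below $0.43q^2$ at $0.62q$ lines, I would instead allow the lines to live in $O(1)$ nearly-parallel planes (tilting slightly so that a bounded number of lines share each point), or replace the plane-by-plane construction with a product-type construction $A \times \mathbb{F}_q$ for a suitable set $A \subseteq \mathbb{F}_q^2$ together with lines in the $x_3$-direction, which makes the point-count and line-count transparent. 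Either way, the essential content is the two-dimensional extremal calculation, and the three-dimensional statement follows by summing over a foliation.
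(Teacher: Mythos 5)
There is a fatal counting error at the heart of your construction, before any two-dimensional optimization even matters. If you place roughly $0.62q$ lines in each of the $q$ parallel planes $\pi_c$, you obtain $q\cdot 0.62q=0.62q^2$ lines in total, not $(1-o(1))0.62q^3$: you are short by a factor of $q$. Nor can this be repaired within the parallel-plane architecture. An affine plane contains only $q^2+q$ lines altogether, so to assemble $0.62q^3$ lines inside a union of disjoint parallel planes you would need on the order of $0.62q^2$ lines per plane; but any set of $\Theta(q^2)$ lines in a plane covers $(1-o(1))q^2$ of its points (apply Lemma \ref{thm:planesLines} with $k=\Theta(q)$), and since distinct parallel planes are disjoint these unions add. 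The best you could possibly do is to fill about $0.62q$ of the planes completely, giving $|P(L)|\geq(1-o(1))0.62q^3$, far above the target $0.43q^3$. Your fallbacks fail for the same reason: $O(1)$ nearly-parallel planes contain only $O(q^2)$ lines, and the product construction $A\times\mathbb{F}_q$ with lines in the $x_3$-direction yields at most $q^2$ lines. The paper's construction exploits exactly the asymmetry that a foliation destroys: it takes \emph{all} lines in a union of $0.62q$ planes passing through a common point $p$, chosen (via a dual conic in the projection from $p$) so that no three share a line. Each pairwise overlap of two such planes costs only one repeated line, so only $\binom{0.62q}{2}=O(q^2)$ lines are lost in total, while it saves $q-1$ repeated points, so about $(q-1)\binom{0.62q}{2}\approx 0.19q^3$ points are saved; this is what pushes the union from $0.62q^3$ down to roughly $0.428q^3<0.43q^3$ while retaining $(1-o(1))0.62q^3$ lines. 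Parallel planes have no overlaps at all, so no such saving is available.

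A secondary gap: even taken on its own terms, your two-dimensional strip construction cannot reach the required density. Taking a $0.62/d$ fraction of the lines in each of $d$ parallel classes covers a $1-(1-0.62/d)^d\geq 1-e^{-0.62}\approx 0.462$ fraction of the plane for every $d$, so strips never get below $0.462q^2$, let alone $0.43q^2$, and you offer no mechanism for correlating the choices across classes that beats this. The extremal planar configuration is the opposite of strips: $s=0.62q$ lines, pairwise non-parallel and no three concurrent (e.g., chosen from the dual of a conic), whose union is $sq-\binom{s}{2}+O(q)\approx 0.428q^2$ by inclusion--exclusion, and equality in that count forces exactly the ``no three concurrent'' condition. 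This dual-arc configuration is precisely what the paper lifts to three dimensions --- not by placing copies of it in parallel planes, but by taking the cone over it through a single point.
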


\begin{proof}
Let $p$ be an arbitrary point of $\mathbb{F}_q^3$.
We show below that we can choose a set $\Pi$ of $0.62 q$ planes incident to $p$, such
that no line is contained in $3$ planes of $\Pi$.
The set $L$ will be the set of all lines contained in the union of the planes of $\Pi$.
By inclusion-exclusion, the total number of lines chosen is $|L| \geq 0.62 q^3 - \binom{0.62 q}{2} =
(1-o(1)) 0.62 q^3$, and the total number of points on these lines is $(0.62q^3-1) - (q-1) \binom{0.62 q}{2} + 1 < 0.43 q^3$, for $q$ sufficiently large.

To choose the set $\Pi$, we first project from the point $p$; this is a map from the lines incident to $p$ to points in $\PG[2,q]$, the projective plane over $\mathbb{F}_q$.
In this projection, each plane incident to $p$ corresponds to a line in $\PG[2,q]$.
A conic in $\PG[2,q]$  is a set of $q+1$ points, no three collinear; the
projective dual to a conic is a set of $q+1$ lines, no three coincident.
By choosing $\Pi$ to be an arbitrary subset of size $0.62 q$ among the planes associated to such a set of
lines, we ensure that no three contain a common line.
\end{proof}

\subsection{Coplanar lines and Conjecture
\ref{conj:weakUnionOfLines}}\label{sec:coplanarLines}

A consequence of the near tightness of Proposition \ref{th:generalUnionOfLines} is
that any substantial improvement to Theorem \ref{th:3DNikodym} must use some
additional information about Nikodym sets, beyond the fact that the definition
of a Nikodym set $\N$ guarantees the existence of $|\N^c|$ distinct lines, each
incident to $q-1$ points of $\N$.
One such property is that no plane can contain too many of the lines associated
to the complement of a Nikodym set.
{ This property is captured in Proposition \ref{thm:notTooManyLinesInAPlane}.}

{
	Before giving Proposition \ref{thm:notTooManyLinesInAPlane}, we establish a simple lemma related to planar Nikodym sets.
	The following Lemma is not new, see for example \cite{bruen1977blocking, feng2010some}.
	For the reader's convenience, we include a proof of the precise formulation we need.
	We call a line that contains exactly one point of any set a {\em tangent line}.
\begin{lemma}\label{le:feng}
	Let $S \subset \mathbb{F}_q^2$ so that there is at least one tangent line at each point of $S$.
	Then, $|S| \leq q^{3/2}$.
\end{lemma}
\begin{proof}
	Label all of the lines of $\mathbb{F}_q^2$ except for one tangent at each point of $S$ as $\ell_1, \ldots, \ell_{q^2 + q - |S|}$, and let $k_i$ be the number of points of $S$ on line $\ell_i$.
	Since each point of $S$ is contained in $q+1$ lines, of which all but one are enumerated, we have
	\[\sum k_i = q|S|. \]
	Note that each pair of points of $S$ is in exactly one of the enumerated lines, hence
	\[\sum k_i(k_i - 1) = |S|(|S| - 1). \]
	Combining these with an application of Cauchy-Schwarz,
	\[|S|^2 + (q-1)|S| = \sum k_i^2 \geq q^2|S|^2(q^2 + q - |S|)^{-1}. \]
	After some straightforward rearrangement, this implies
	\[q^3 \geq |S|^2, \]
	and hence the result.
\end{proof}

}
\begin{proposition}\label{thm:notTooManyLinesInAPlane}
Let $\N \subseteq \mathbb{F}_q^3$ be a Nikodym set.
Let $L$ be a set of lines, such that each line of $L$ is incident to exactly one
point of $\N^c$, and each point of $\N^c$ is incident to exactly one line of
$L$.
Then any plane in contains at most $q^{3/2}$ lines of $L$.
\end{proposition}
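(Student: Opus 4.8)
The plan is to fix an arbitrary plane $\pi$, restrict attention to the subset $L_\pi\subseteq L$ of lines of $L$ contained in $\pi$, and bound $k:=|L_\pi|$. Write $B=\N^c\cap\pi$. Since each line of $L$ meets $\N^c$ in exactly one point, and every line of $L_\pi$ lies in $\pi$, each line of $L_\pi$ meets $B$ in exactly one point, i.e.\ is tangent to $B$. Moreover, because each point of $\N^c$ lies on only one line of $L$, distinct lines of $L_\pi$ have distinct tangent points. Thus $L_\pi$ is a family of $k$ lines, each tangent to $B$ at its own distinct point, and in particular $k\le |B|$. (Note the Nikodym property itself is not used beyond the two hypotheses on $L$; the tangency is exactly the statement that each line meets $\N^c$ once.)

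The key step is to pass to the projective closure $\overline{\pi}=\PG[2,q]$ of $\pi$, so that $B$ still lies in the affine part, every line of $L_\pi$ extends to a projective line meeting $B$ in exactly one point and meeting $\overline{\pi}\setminus B$ in exactly $q$ points, and any two distinct lines of $L_\pi$ meet in exactly one point. The crucial observation is that this common point lies \emph{outside} $B$: a point on two lines of $L_\pi$ would be the tangent point of both, contradicting distinctness of tangent points. Working in the projective closure is what lets me assert that every pair of lines meets, absorbing parallel classes into points at infinity, which lie outside $B$.

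With this in hand I would double count. For $x\in\overline{\pi}\setminus B$, let $d_x$ be the number of lines of $L_\pi$ through $x$. Counting incidences gives $\sum_x d_x = kq$, and counting unordered pairs of lines (each meeting in exactly one point of $\overline{\pi}\setminus B$) gives $\sum_x \binom{d_x}{2}=\binom{k}{2}$. Writing $c=|\overline{\pi}\setminus B|=q^2+q+1-|B|$, Cauchy--Schwarz gives $\sum_x d_x^2\ge (kq)^2/c$, and substituting into $\binom{k}{2}=\tfrac12\bigl(\sum_x d_x^2-\sum_x d_x\bigr)$ and simplifying yields, in the case $|B|>q+1$, the bound $k\le \dfrac{(q-1)\,c}{|B|-q-1}$.

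Finally I would combine the two estimates $k\le |B|$ and $k\le (q-1)c/(|B|-q-1)$. For $q$ large: if $|B|\le q^{3/2}$ the first bound already gives $k\le q^{3/2}$; if $|B|>q^{3/2}$ then $c\le q^2$ and $|B|-q-1=(1-o(1))|B|$, so the second bound gives $k\le (1+o(1))\,q^3/|B|<(1+o(1))q^{3/2}$. Either way $k\le (1+o(1))q^{3/2}$, proving the proposition. The main obstacle, and the only genuinely nontrivial point, is the observation that pairwise intersections of the tangent lines avoid $B$, which is precisely what powers the Cauchy--Schwarz step; the remainder is a routine optimization whose worst case occurs at $|B|\approx q^{3/2}$, matching the Hermitian unital as the extremal configuration.
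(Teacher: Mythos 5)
Your proof is correct, and after the shared (essentially forced) opening move --- restricting to a plane $\pi$ and noting that the lines of $L$ inside $\pi$ are tangent to $B=\N^c\cap\pi$ at pairwise distinct points --- it proceeds quite differently from the paper's. The paper's proof is a two-line reduction: the set $P$ of tangent points is the complement of a planar (weak) Nikodym set in $\pi$, so the planar bound quoted in its introduction, an adaptation of the bound on minimal blocking sets \cite{bruen1977blocking}, gives $|P|\le(1+o(1))q^{3/2}$ at once. You instead prove the needed planar estimate from scratch, via the observation that two lines of $L_\pi$ meet outside $B$, the counts $\sum_x d_x=kq$ and $\sum_x\binom{d_x}{2}=\binom{k}{2}$ over $x\notin B$, Cauchy--Schwarz, and the trivial bound $k\le|B|$; all steps check out, including the final case split at $|B|\approx q^{3/2}$. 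In effect you have inlined a proof of the blocking-set-type estimate that the paper cites as a black box: running your count on the complement $P$ of a planar weak Nikodym set, with one tangent line per point (so $k=|P|$ and $B=P$), your inequality $k\,(|B|-q-1)\le(q-1)(q^2+q+1-|B|)$ becomes $(|P|-1)^2\le q^3$, i.e., the sharp $q^{3/2}+1$ bound. The paper's route buys brevity and the sharp constant from the literature; yours buys self-containedness, and it also sidesteps a small imprecision in the paper's wording (its proof calls $\pi\setminus P$ the complement of a planar Nikodym set, though the tangent-line property is guaranteed only at points of $P$, i.e., it is a weak Nikodym set in the paper's own terminology; your argument uses exactly the two hypotheses on $L$ and nothing more, so this distinction never arises). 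Your closing remark that the Hermitian unital witnesses the balance point $|B|\approx q^{3/2}$ is consistent with the extremal constructions in the paper's section on Hermitian varieties.
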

Note that the existence of a set satisfying the conditions on $L$ in this proposition is guaranteed by the definition of a Nikodym set.

\begin{proof}
Let $\pi$ be a plane, and let $L'$ be the subset of lines of $L$ that are contained
in $\pi$.
Let $P \subseteq \N^c$ be the set of points associated to lines in $L'$.
{ $P$ satisfies the conditions of Lemma \ref{le:feng}.}
Hence, $|L'| = |P| \leq q^{3/2}$.
\end{proof}

The observation recorded in Proposition \ref{thm:notTooManyLinesInAPlane} enables us to show that Conjecture \ref{conj:weakUnionOfLines} implies the three dimensional case of Conjecture \ref{conj:o1}.
Since Proposition \ref{thm:notTooManyLinesInAPlane} only gives an upper bound of $(1+o(1))q^{3/2}$ lines contained in any plane, while Conjecture \ref{conj:weakUnionOfLines} requires a bound of any function in $\omega(q)$, we will need to use some additional incidence theory to bridge the gap.
In particular, we will use the following lemma, which is a special case of Corollary 6 in \cite{lund2014incidence}.

\begin{lemma}[\cite{lund2014incidence}]\label{thm:planesLines}
For $k > 1$, a set of $kq$ planes in $\mathbb{F}_q^3$ is incident to at least $(1-\frac{1}{k-1+k^{-1}})q^3$
points.
A set of $kq$ lines in $\mathbb{F}_q^2$ is incident to {at least} $(1-\frac{1}{k-1+k^{-1}})q^2$
points.
\end{lemma}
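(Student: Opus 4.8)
The plan is to prove both statements by the same method used to establish Theorem~\ref{th:incidenceBound}: derive a point--flat incidence bound from the bipartite expander mixing lemma (Lemma~\ref{th:expanderMixing}) and then specialize it to the covering problem, exploiting the fact that every flat of $L$ is entirely contained in the set of covered points. I would treat the three-dimensional statement (planes in $\mathbb{F}_q^3$) in detail, since the planar statement (lines in $\mathbb{F}_q^2$) is identical after substituting the relevant incidence graph.

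First I would build the bipartite incidence graph $G$ whose left vertices are the $q^3$ points of $\mathbb{F}_q^3$ and whose right vertices are the $q^3 + q^2 + q = (1+o(1))q^3$ planes, with an edge for each incident point--plane pair. This graph is biregular: each plane carries $q^2$ points and each point lies on $(1+o(1))q^2$ planes, so $e(G) = (1+o(1))q^5$. The key linear-algebra input is the eigenvalue ratio $\lambda = \lambda_2/\lambda_1$. Writing $A$ for the point--plane incidence matrix, two distinct points lie on a common line and hence on exactly $q+1$ common planes, so $AA^{T} = q^2 I + (q+1)J$; its eigenvalues are $(1+o(1))q^4$ once and $q^2$ with multiplicity $q^3-1$, giving $\lambda_1 = (1+o(1))q^2$, $\lambda_2 = q$, and $\lambda = (1+o(1))q^{-1}$.

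Next I would apply Lemma~\ref{th:expanderMixing} with $S = P(L)$ the set of covered points and $T = L$. Setting $\alpha = |P(L)|q^{-3}$ and $\beta = (1+o(1))|L|q^{-3}$ the densities of $S$ and $T$, the crucial observation is that every plane of $L$ has all of its points in $P(L)$, so $e(S,T)$ counts every edge incident to $L$; hence $e(S,T)/e(G) = \beta$ exactly, and the deviation in Lemma~\ref{th:expanderMixing} equals $\beta(1-\alpha)$. The lemma therefore gives
\[\beta(1-\alpha) \leq \lambda\sqrt{\alpha\beta(1-\alpha)(1-\beta)},\]
which after squaring and dividing by $\beta(1-\alpha)$ becomes $\beta(1-\alpha) \leq \lambda^2\alpha(1-\beta)$. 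Substituting $|L| = kq$, so that $\beta = (1+o(1))kq^{-2}$ and $\lambda^2 = (1+o(1))q^{-2}$ give $\beta/\lambda^2 = (1+o(1))k$, and solving the resulting linear inequality in $\alpha$ yields $\alpha \geq (1-o(1))\tfrac{k}{k+1}$, i.e. $|P(L)| \geq (1-o(1))\tfrac{k}{k+1}q^3$. Since $\tfrac{k}{k+1} \geq 1 - \tfrac{1}{k-1+k^{-1}}$ for all $k>1$, this establishes the claimed bound.

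For the planar statement I would repeat the argument with the point--line incidence graph of $\mathbb{F}_q^2$: here two distinct points determine a unique common line, so $AA^{T} = qI + J$, giving $\lambda = (1+o(1))q^{-1/2}$, and the identical computation produces $|P(L)| \geq (1-o(1))\tfrac{k}{k+1}q^2$. The main obstacle is the eigenvalue computation together with the care needed in applying the \emph{biregular} mixing lemma to a non-regular incidence graph; a secondary point is that the expander mixing lemma in fact delivers the cleaner constant $\tfrac{k}{k+1}$, so the weaker form $1 - \tfrac{1}{k-1+k^{-1}}$ recorded in Corollary~6 of \cite{lund2014incidence} follows a fortiori, and reproducing that exact expression would instead require the lower-order bookkeeping carried out there.
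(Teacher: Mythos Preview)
The paper does not give its own proof of this lemma; it is quoted as a special case of Corollary~6 in \cite{lund2014incidence}. Your approach---applying the bipartite expander mixing lemma (Lemma~\ref{th:expanderMixing}) to the point--flat incidence graph, exactly as in the proof of Theorem~\ref{th:incidenceBound}---is correct and is the method of \cite{lund2014incidence}, so there is nothing essentially different to compare.

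Two small comments on the write-up. First, the point--plane and point--line incidence graphs \emph{are} biregular (left degrees $q^2+q+1$ and $q+1$, right degrees $q^2$ and $q$, respectively), so Lemma~\ref{th:expanderMixing} applies directly; there is no obstacle of the kind you mention. Second, the $o(1)$ loss is unnecessary: keeping the exact values $\lambda^2 = 1/(q^2+q+1)$ and $\beta = k/(q^2+q+1)$ (respectively $1/(q+1)$ and $k/(q+1)$ in the planar case), your inequality $\beta(1-\alpha) \le \lambda^2 \alpha(1-\beta)$ reduces exactly to $k(1-\alpha) \le \alpha(1-\beta)$, giving $\alpha \ge k/(k+1-\beta) > k/(k+1)$. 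Since $k/(k+1) \ge 1 - 1/(k-1+k^{-1})$ for every $k>1$ (the difference is $(2k-1)/((k+1)(k^2-k+1))>0$), the stated bound follows without any asymptotic bookkeeping.
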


We now prove that Conjecture \ref{conj:weakUnionOfLines} implies the three
dimensional case of Conjecture \ref{conj:o1}.

\begin{theorem}\label{th:unionOfLinesImplieso1}
If Conjecture \ref{conj:weakUnionOfLines} holds, then the case $n=3$ of Conjecture
\ref{conj:o1} holds.
\end{theorem}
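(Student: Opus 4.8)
The plan is to argue by contradiction on the size of the complement, using the reformulation that the case $n=3$ of Conjecture \ref{conj:o1} is the statement $f(3,q) = o(q^3)$. Suppose this fails, so that for some constant $\epsilon > 0$ there is an infinite sequence of prime powers $q$ carrying Nikodym sets $\mathcal{N}$ with $|\mathcal{N}^c| \geq \epsilon q^3$. Let $L$ be the family guaranteed by the definition of a Nikodym set: one line $\ell_p$ through each $p \in \mathcal{N}^c$ with $\ell_p \setminus \{p\} \subseteq \mathcal{N}$. Because $\ell_p$ meets $\mathcal{N}^c$ only at $p$, no line of $L$ can be assigned to two different complement points, so the lines are distinct and $|L| = |\mathcal{N}^c| = \Omega(q^3)$. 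This already supplies the cardinality hypothesis of Conjecture \ref{conj:weakUnionOfLines}; the work is to secure the coplanarity hypothesis and then to extract a lower bound on $|\mathcal{N}|$ from the conclusion.

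To enforce that no plane contains $\omega(q)$ lines, I would begin from Proposition \ref{thm:notTooManyLinesInAPlane}, which caps the number of lines of $L$ in any plane at $(1+o(1))q^{3/2}$. This bound is a factor $\sqrt{q}$ above the $O(q)$ threshold that the conjecture wants, and naive random sparsification down to $O(q)$ lines per plane would collapse $|L|$ to $O(q^{5/2})$, destroying the cardinality hypothesis. The point of Lemma \ref{thm:planesLines} is to avoid this loss. Call a plane \emph{heavy} if it contains at least $kq$ lines of $L$, for a large constant $k$. The planar case of Lemma \ref{thm:planesLines} says the coplanar lines in a heavy plane already cover at least $\left(1 - \tfrac{1}{k-1+k^{-1}}\right)q^2$ of that plane's points, and the spatial case bounds the number of points covered by a family of heavy planes; combining these with a double count of line--plane incidences (each line lies in $q+1$ planes) should show that the total number of lines lying in heavy planes is only a small fraction of $|L|$. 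Deleting those lines then leaves a subfamily $L'$ with $|L'| = \Omega(q^3)$ and at most $kq = O(q)$ lines per plane.

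Applying Conjecture \ref{conj:weakUnionOfLines} to $L'$ yields $|P(L')| \geq (1-o(1))q^3$. Since every line of $L'$ lies in $\mathcal{N}$ apart from its single point of $\mathcal{N}^c$, the natural final move is to track only the $\mathcal{N}$-portion of this coverage, writing $|\mathcal{N}| \geq |P(L') \cap \mathcal{N}| = |P(L')| - |\{p \in \mathcal{N}^c : \ell_p \in L'\}|$ and aiming to show the subtracted term is genuinely smaller than the gain. I expect this conversion to be the main obstacle: the through-complement lines straddle the partition $\mathbb{F}_q^3 = \mathcal{N} \sqcup \mathcal{N}^c$, so the bound is only useful if one can rule out the lines concentrating their $\mathcal{N}$-coverage, or equivalently replace $L'$ by a large spread family whose union is essentially confined to $\mathcal{N}$. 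This is precisely where the newly enforced $O(q)$-per-plane spread should be leveraged, plausibly against an incidence bound of the flavor of Theorem \ref{th:incidenceBound}. Certifying the pruning through Lemma \ref{thm:planesLines} rather than losing the $\sqrt{q}$ factor, and closing this coverage-splitting step, are the delicate heart of the argument, whereas the cardinality bookkeeping and the abstract shape of the conjecture are routine.
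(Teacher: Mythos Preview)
Your proposal has a genuine gap at precisely the point you flag as ``the main obstacle'': converting $|P(L')| \geq (1-o(1))q^3$ into a bound on $|\mathcal{N}|$. The inequality you write, $|\mathcal{N}| \geq |P(L')| - |\{p \in \mathcal{N}^c : \ell_p \in L'\}|$, subtracts two quantities that are both of order $q^3$, and nothing in your sketch prevents the cancellation from being total. You suggest that the $O(q)$-per-plane spread together with Theorem~\ref{th:incidenceBound} might rescue this, but that incidence bound is exactly what already went into Theorem~\ref{th:3DNikodym}, and there it only yields $|\mathcal{N}| \geq 0.38q^3$, not $(1-o(1))q^3$.

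The missing idea is disarmingly simple: before doing anything else, split $L$ into two halves $L_1$ and $L \setminus L_1$. Because each line $\ell_p$ meets $\mathcal{N}^c$ only at its own point $p$, the points of $\mathcal{N}^c$ whose line lies in $L \setminus L_1$ (half of $\mathcal{N}^c$) are disjoint from every line of $L_1$, hence from $P(L_1)$. So if any subset of $L_1$ already covers $(1-o(1))q^3$ points, then half of $\mathcal{N}^c$ fits in the remaining $o(q^3)$ points and you are done. This half-splitting trick is what the paper uses, and it eliminates the cancellation problem entirely. Your pruning step also needs adjustment: the paper does not show that the lines in heavy planes are always $o(q^3)$ many (your proposed double-count does not obviously give this). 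Instead it splits into two cases: if $\Omega(q^{5/2}\log q)$ lines lie in heavy planes, then by Proposition~\ref{thm:notTooManyLinesInAPlane} there are $\Omega(q\log q)$ heavy planes, and two applications of Lemma~\ref{thm:planesLines} (once in each heavy plane, once across the planes) show these lines alone cover $(1-o(1))q^3$ points; otherwise the pruned family is still $\Omega(q^3)$ and Conjecture~\ref{conj:weakUnionOfLines} applies.
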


\begin{proof}
Suppose that Conjecture \ref{conj:weakUnionOfLines} holds.

Let $\cN$ be the complement of a Nikodym set in $\mathbb{F}_q^3$.
Let $L$ be a set of lines such that each line of $L$ is incident to exactly one
point of $\cN$, and each point of $\cN$ is incident to exactly one line of $L$;
the existence of such a set is guaranteed by the definition of a Nikodym set.
Let $L_1 \subset L$ be an arbitrary subset of $\lfloor |L|/2 \rfloor$ lines of
$L$, and let $P \subset \cN$ be the set of points in $\cN$ that are not incident to any line in $L_1$.

Let $\alpha(q) \in \omega(q)$, and
let $\Pi$ be the set of planes that contain more than $\alpha(q)$ lines
of $L_1$.
Let $L_2 \subseteq L_1$ be the subset of lines in $L_1$ that are each contained in
some plane of $\Pi$.

Suppose that $|L_2| = \Omega(q^{5/2}\log(q))$.
Since each plane $\pi \in \Pi$ contains at least $\alpha(q)$ lines of
$L_2$, Lemma \ref{thm:planesLines} implies that the probability that a uniformly chosen point of $\pi$ is not on any line of $L_2$ is bounded above by $(1+o(1))q/\alpha(q)$.
By Proposition \ref{thm:notTooManyLinesInAPlane}, no plane of $\Pi$ contains
more than $(1+o(1))q^{3/2}$ lines of $L_2$; hence, $|\Pi| \geq (1-o(1))q^{-3/2}|L_2| = \Omega(q \log q)$.
By Lemma \ref{thm:planesLines}, the probability that a uniformly chosen point of $\mathbb{F}_q^3$ is not on any plane of $\Pi$ is bounded above by $O(1/\log(q))$.
By a union bound, all but $O(q^3/\log(q) + q^4/\alpha(q)) = o(q^3)$ points of $\mathbb{F}_q^3$ are
contained in some line of $L_2$.
By construction, half of the points of $\cN$ are not in any line of $L_1$, and
hence $|\cN| = o(q^3)$.

Now, suppose that $|L_2| = O(q^{5/2} \log q) = o(q^3)$.
By construction, no plane contains more than $\alpha(q)$ lines of $L_1
\setminus L_2$.
Hence, Conjecture \ref{conj:weakUnionOfLines} implies that either $|L_1 \setminus L_2| = o(q^3)$, and hence $|\cN | = o(q^3)$, or $|P(L_1 \setminus L_2)| = (1-o(1))q^3$, and hence $|\cN | = o(q^3)$.
\end{proof}

\subsection{Hermitian varieties}\label{sec:hermitian}

In this section, we describe a construction, mentioned in the introduction, showing that Conjecture \ref{conj:unionOfLines}, if true, is nearly as strong as possible.
We rely on the classical results of Bose and Chakravarti \cite{bose1966hermitian}.

Let $q = p^2$, for $p$ a prime power.
For $v \in \mathbb{F}_q$, we define the conjugate $\overline{v} = v^p$.
Since $q$ has order $p^2$, we have $\overline{\overline{v}} = v$.
We will use homogenous coordinates to represent a point $v \in \PG[n,q]$ as a column vector $\mathbf{v} = (v_0, v_1, \ldots, v_n)^T$.

A square matrix $H = ((h_{ij}))$ for $i,j = 0,1,\ldots,n$ and $h_{ij} \in \mathbb{F}_q$ is \textit{Hermitian} if $h_{ij} = \overline{h_{ji}}$ for all $i,j$.
Let $\mathbf{x}^T = (x_0, x_1, \ldots, x_n)$ and $\overline{\mathbf{x}} = (\overline{x_0}, \overline{x_1}, \ldots, \overline{x_n})^T$.
The set of points $x$ in $\PG[n,q]$ whose coordinates satisfy $\mathbf{x}^T H \overline{\mathbf{x}} = 0$ for a Hermitian matrix $H$ is a \textit{Hermitian variety}.
The \textit{rank} of the Hermitian variety $V$ defined by $\mathbf{x}^T H \overline{\mathbf{x}} = 0$ is defined to be the rank of $H$.
We say that $V$ is \textit{non-degenerate} if its rank is $n+1$.

Let $V$ be a rank $r$ Hermitian variety in $\PG[n,q]$ defined by $\mathbf{x}^T H \overline{\mathbf{x}} = 0$.
A point $c$ of $V$ is \textit{singular} if $\mathbf{c}^T H = \mathbf{0}$.
Clearly, if $V$ is non-degenerate, it has no singular points.
Otherwise, $\mathbf{c}^T H = 0$ has $n - r + 1$ independent solutions, and hence defines an $(n-r)$-flat, which we term the \textit{singular space} of $V$.

The set of points corresponding to row vectors $\mathbf{x}^T$ that satisfy the equation $\mathbf{x}^T H \overline{\mathbf{c}}=0$ is the \textit{tangent space} at $\mathbf{c}$.
If $\mathbf{c}$ is singular, this is the entire space; otherwise, $H \overline{\mathbf{c}}$ is a non-zero vector, and hence the tangent space is a hyperplane.

\begin{lemma}[Section 7 in \cite{bose1966hermitian}]\label{thm:flatSpaceIntersectionWithHermVar}
The intersection of a Hermitian variety with a flat space is a Hermitian variety.
In particular, a line intersects a Hermitian variety in a single point, $q^{1/2}+1$ points, or is entirely contained in the variety.
\end{lemma}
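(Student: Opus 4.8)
The plan is to prove Lemma~\ref{thm:flatSpaceIntersectionWithHermVar} in two parts. First I would establish the general statement that the intersection of a Hermitian variety $V$ with a flat space $\Sigma$ is again a Hermitian variety (of $\Sigma$, viewed as a projective space in its own right). Then I would specialize to lines to extract the three possible intersection sizes.

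For the first part, suppose $V \subseteq \PG[n,q]$ is defined by $\mathbf{x}^T H \overline{\mathbf{x}} = 0$ for a Hermitian matrix $H$, and let $\Sigma$ be a $d$-flat. I would parametrize $\Sigma$ by a full-rank $(n+1) \times (d+1)$ matrix $M$ over $\mathbb{F}_q$, so that the points of $\Sigma$ are exactly those of the form $\mathbf{x} = M\mathbf{y}$ for $\mathbf{y} \in \PG[d,q]$. Substituting into the defining equation, a point $M\mathbf{y}$ lies in $V$ if and only if $(M\mathbf{y})^T H \overline{(M\mathbf{y})} = \mathbf{y}^T (M^T H \overline{M}) \overline{\mathbf{y}} = 0$. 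The key observation is that $H' := M^T H \overline{M}$ is itself a Hermitian matrix: using $\overline{\overline{v}} = v$ and $h_{ij} = \overline{h_{ji}}$, one checks that $\overline{(H')^T} = \overline{\overline{M}^T H^T M} = M^T \overline{H^T} \overline{M} = M^T H \overline{M} = H'$, since $\overline{H^T} = H$ by the Hermitian condition. (Here $\overline{M}$ denotes entrywise conjugation, and conjugation commutes with transposition and with matrix multiplication because the Frobenius map $v \mapsto v^p$ is a field automorphism.) Thus $V \cap \Sigma$ is precisely the Hermitian variety in $\Sigma \cong \PG[d,q]$ defined by $H'$, which proves the general claim.

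For the line case I would take $d = 1$, so $V \cap \ell$ is a Hermitian variety in the projective line $\PG[1,q]$. There are two possibilities for the Hermitian matrix $H'$. If $H' = 0$, then every point of $\ell$ satisfies the equation and $\ell \subseteq V$ entirely. If $H' \neq 0$, then I claim the number of points is $1$ or $q^{1/2} + 1$. To see this I would directly count the solutions in $\PG[1,q]$ of $\mathbf{y}^T H' \overline{\mathbf{y}} = 0$ where $H' = \begin{pmatrix} a & b \\ \overline{b} & c \end{pmatrix}$ is a nonzero Hermitian $2 \times 2$ matrix with $a, c \in \mathbb{F}_p$ (diagonal entries of a Hermitian matrix are fixed by conjugation, hence lie in the fixed field $\mathbb{F}_{\sqrt{q}} = \mathbb{F}_p$). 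Writing $\mathbf{y} = (y_0, y_1)^T$, the equation becomes $a\, y_0 \overline{y_0} + b\, y_0 \overline{y_1} + \overline{b}\, y_1 \overline{y_0} + c\, y_1 \overline{y_1} = 0$, and since $v\overline{v} = v^{1+p} = N(v)$ is the norm and $b y_0 \overline{y_1} + \overline{b\, y_0 \overline{y_1}}$ is a trace, the whole expression lies in $\mathbb{F}_p$; I would count its zeros in $\PG[1,q]$ by the standard dehomogenization argument (setting $y_1 = 1$ and handling the point at infinity separately), reducing to counting roots of a norm-form equation over $\mathbb{F}_q/\mathbb{F}_p$. This gives exactly $q^{1/2}+1$ points when $H'$ is non-degenerate and $1$ point when $H'$ has rank $1$.

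The main obstacle is the line-case point count: verifying that a nonzero Hermitian form in two variables over $\mathbb{F}_q$ (with $q = p^2$) vanishes on exactly $1$ or $\sqrt q + 1$ projective points. This is a concrete but slightly delicate computation with the norm map $N(v) = v^{1+\sqrt q}$ from $\mathbb{F}_q^{*}$ onto $\mathbb{F}_{\sqrt q}^{*}$, whose fibers all have size $\sqrt q + 1$; the two cases correspond to whether the rank-$1$ or rank-$2$ (non-degenerate) form is obtained, matching the count of points on a non-degenerate Hermitian variety in $\PG[1,q]$ being $\sqrt q + 1$ and a degenerate one being a single point. Since the lemma is attributed to Bose and Chakravarti, I would expect to cite their count for the non-degenerate case rather than reprove the full classification, using the reduction above to identify $V \cap \ell$ with a Hermitian variety in $\PG[1,q]$ and then invoking the known cardinality.
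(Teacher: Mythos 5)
Your proposal is correct, but note that the paper does not prove this lemma at all: it is imported verbatim as a classical result, with the citation to Section 7 of \cite{bose1966hermitian} serving as the entire justification. So your argument is a genuine addition rather than a parallel to anything in the text, and its substance checks out. The reduction step is right: parametrizing the $d$-flat by a full-rank matrix $M$ and observing that $H' = M^T H \overline{M}$ is again Hermitian (because conjugation is the Frobenius automorphism $v \mapsto v^{\sqrt{q}}$, hence commutes with transposition and matrix products, and $\overline{H^T} = H$) identifies $V \cap \Sigma$ with the Hermitian variety of $H'$ in $\PG[d,q]$. For the line case your trichotomy is also right, and the count can be finished elementarily rather than cited: if $H' = 0$ the line lies in $V$; if $a \neq 0$ (or symmetrically $c \neq 0$), completing the square turns the equation into $N(z) = \lambda N(y_1)$ with $z = y_0 + a^{-1}\overline{b}\,y_1$ and $\lambda = -(ac - N(b))a^{-2} \in \mathbb{F}_{\sqrt{q}}$, so rank $1$ (i.e.\ $ac = N(b)$) gives the single point $z = 0$, while rank $2$ gives $\sqrt{q}+1$ points because the norm $N:\mathbb{F}_q^{*} \to \mathbb{F}_{\sqrt{q}}^{*}$ is surjective with fibers of size $\sqrt{q}+1$. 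The only subcase your ``norm-form'' description glosses over is $a = c = 0$, $b \neq 0$, where the dehomogenized equation is the trace condition $\mathrm{Tr}(b y_0) = 0$, contributing $\sqrt{q}$ affine solutions plus the point at infinity --- again $\sqrt{q}+1$; this is a trace count, not a norm count, so the two cases should be kept separate. These numbers also match the $n=1$ specializations of Theorem \ref{thm:numPointsHermVar} quoted in the paper ($\phi(1,q) = q^{1/2}+1$ in the non-degenerate case, and $1$ in the rank-one case), which is a useful consistency check. What your route buys is self-containedness, at the cost of a computation the authors evidently preferred to outsource to Bose and Chakravarti.
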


Given a Hermitian variety $V$, we define  \textit{tangent lines} to be those lines that intersect $V$ in exactly $1$ point.

\begin{theorem}[Theorem 7.2 in \cite{bose1966hermitian}]\label{thm:lineContainedinHermVar}
If $V$ is a degenerate Hermitian variety of rank $r < n + 1$, and $c$ is a point belonging to the singular space of $V$, and $d$ is an arbitrary point of $V$, then each point on the line $cd$ belongs to $V$.
\end{theorem}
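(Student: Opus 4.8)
The plan is to verify the defining equation of $V$ directly at every point of the line $cd$. A point on the projective line through $c$ and $d$ has homogeneous coordinates $\mathbf{p} = \lambda\mathbf{c} + \mu\mathbf{d}$ for some $(\lambda,\mu) \neq (0,0)$, so it suffices to show that $\mathbf{p}^T H \overline{\mathbf{p}} = 0$ for all such $\lambda,\mu$. Since conjugation is coordinatewise, $\overline{\mathbf{p}} = \overline{\lambda}\,\overline{\mathbf{c}} + \overline{\mu}\,\overline{\mathbf{d}}$, and I would expand the form into four scalar terms:
\[
\mathbf{p}^T H \overline{\mathbf{p}} = \lambda\overline{\lambda}\,\mathbf{c}^T H \overline{\mathbf{c}} + \lambda\overline{\mu}\,\mathbf{c}^T H \overline{\mathbf{d}} + \mu\overline{\lambda}\,\mathbf{d}^T H \overline{\mathbf{c}} + \mu\overline{\mu}\,\mathbf{d}^T H \overline{\mathbf{d}}.
\]

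Next I would dispose of the terms one at a time. Because $c$ lies in the singular space of $V$, we have $\mathbf{c}^T H = \mathbf{0}$, which immediately annihilates the first two terms, since both carry $\mathbf{c}^T H$ as a left factor. The last term vanishes because $d \in V$, i.e. $\mathbf{d}^T H \overline{\mathbf{d}} = 0$. This leaves only the cross-term $\mathbf{d}^T H \overline{\mathbf{c}}$, which is the one place where the Hermitian property must genuinely be used.

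To handle that cross-term, I would show it equals the conjugate of $\mathbf{c}^T H \overline{\mathbf{d}}$. Writing both in coordinates and using $\overline{h_{ij}} = h_{ji}$ together with $\overline{\overline{v}} = v$, one checks that $\overline{\mathbf{c}^T H \overline{\mathbf{d}}} = \sum_{i,j}\overline{c_i}\,h_{ji}\,d_j = \mathbf{d}^T H \overline{\mathbf{c}}$. But $\mathbf{c}^T H \overline{\mathbf{d}} = 0$ (again because $\mathbf{c}^T H = \mathbf{0}$), so its conjugate $\mathbf{d}^T H \overline{\mathbf{c}}$ vanishes as well. Hence all four terms are zero, $\mathbf{p}^T H \overline{\mathbf{p}} = 0$, and $\mathbf{p} \in V$, as required.

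The only subtlety, and the single step where the argument could go wrong, is the sesquilinearity of the form: it is conjugate-linear rather than linear in its second slot, so the two cross-terms are not symmetric in the naive bilinear sense and must be paired using the Hermitian condition $h_{ij} = \overline{h_{ji}}$. Fortunately the singularity hypothesis is strong enough to kill the two terms with $\mathbf{c}^T H$ on the left outright, and the remaining cross-term is controlled by the conjugation identity above, so the proof involves no estimation whatsoever — its entire content is correct bookkeeping of the conjugates.
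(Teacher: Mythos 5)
Your proof is correct. Note that the paper itself gives no proof of this statement---it is quoted verbatim as Theorem 7.2 of Bose and Chakravarti---so there is no internal argument to compare against; your direct expansion of the sesquilinear form at $\lambda\mathbf{c}+\mu\mathbf{d}$ is the standard verification, and your bookkeeping of the conjugates (in particular the identity $\overline{\mathbf{c}^T H \overline{\mathbf{d}}} = \mathbf{d}^T H \overline{\mathbf{c}}$, which uses $h_{ij}=\overline{h_{ji}}$ and $\overline{\overline{v}}=v$) is accurate. One small streamlining you could have used: the singularity hypothesis $\mathbf{c}^T H = \mathbf{0}$ also gives $H\overline{\mathbf{c}} = \mathbf{0}$ directly (transpose and conjugate, using $H^T = \overline{H}$), which annihilates the cross-term $\mathbf{d}^T H \overline{\mathbf{c}}$ without invoking the conjugation identity; but your route through the conjugate of $\mathbf{c}^T H \overline{\mathbf{d}}$ is equally valid.
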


\begin{theorem}[Theorem 7.4 in \cite{bose1966hermitian}]\label{thm:tangentOfHermVar}
If $V$ is a non-degenerate Hermitian variety, the tangent hyperplane at a point $c$ of $V$ intersects $V$ in a degenerate Hermitian variety $U$ of rank $n-1$. The singular space of $U$ consists of the single point $c$.
\end{theorem}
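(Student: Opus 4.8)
The plan is to avoid coordinates and work instead with the Hermitian polarity induced by $H$. Write $B(\mathbf{x},\mathbf{y}) = \mathbf{x}^T H \overline{\mathbf{y}}$ for the sesquilinear form associated to $V$, and for a vector subspace $W \subseteq \mathbb{F}_q^{n+1}$ set $W^\perp = \{ \mathbf{x} : B(\mathbf{x},\mathbf{y}) = 0 \text{ for all } \mathbf{y} \in W\}$. Because $H$ is Hermitian one checks directly that $B(\mathbf{y},\mathbf{x}) = \overline{B(\mathbf{x},\mathbf{y})}$, so $\perp$ is a symmetric relation (the polarity is reflexive); and because $V$ is non-degenerate, $B$ is non-degenerate, whence $\dim W^\perp = (n+1) - \dim W$ as vector spaces and $(W^\perp)^\perp = W$.

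First I would identify the tangent hyperplane $\pi$ at $c$ with the polar hyperplane of $c$: by definition $\pi = \{\mathbf{x} : \mathbf{x}^T H \overline{\mathbf{c}} = 0\} = \{\mathbf{x} : B(\mathbf{x},\mathbf{c}) = 0\} = \langle \mathbf{c}\rangle^\perp$. Since $c$ is a point of a non-degenerate variety it is non-singular, so $H\overline{\mathbf{c}}\neq \mathbf{0}$ and $\pi$ is genuinely a hyperplane. By Lemma \ref{thm:flatSpaceIntersectionWithHermVar}, $U = V \cap \pi$ is a Hermitian variety inside $\pi \cong \PG[n-1,q]$, so it remains only to pin down its rank and its singular space, both of which are controlled by the radical of the restricted form $B|_\pi$.

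The key computation is that the radical of $B|_\pi$, namely $\{\mathbf{x}\in\pi : B(\mathbf{x},\mathbf{y})=0 \text{ for all } \mathbf{y}\in\pi\}$, equals $\pi\cap\pi^\perp$, and that $\pi^\perp = \langle\mathbf{c}\rangle$; the latter follows from $\pi = \langle\mathbf{c}\rangle^\perp$ together with $(W^\perp)^\perp = W$. Crucially, because $c\in V$ we have $B(\mathbf{c},\mathbf{c}) = \mathbf{c}^T H\overline{\mathbf{c}} = 0$, so $\mathbf{c}\in\langle\mathbf{c}\rangle^\perp = \pi$; hence $\pi\cap\pi^\perp = \pi \cap \langle\mathbf{c}\rangle = \langle\mathbf{c}\rangle$, a single projective point. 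This shows at once that the singular space of $U$ is exactly $c$, and that the radical of $B|_\pi$ has vector-space dimension $1$. Since $\pi$ has vector-space dimension $n$, the rank of $B|_\pi$, and therefore the rank of $U$, is $n-1$, as claimed.

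The one place that needs care, and which I regard as the main obstacle to a fully rigorous write-up, is the interface between these intrinsic linear-algebra statements (ranks and radicals of $B$ and its restriction) and the paper's coordinate-dependent definitions of the rank and singular space of a Hermitian variety. Concretely, I must confirm that choosing a basis for the $n$-dimensional space underlying $\pi$ turns $B|_\pi$ into $\mathbf{y}^T H' \overline{\mathbf{y}}$ for a Hermitian matrix $H'$ whose matrix rank equals $n$ minus the vector-space dimension of the radical, and that the paper's notion of a singular point of $U$ (a vector with $\mathbf{c}^T H' = \mathbf{0}$) coincides with membership in this radical. These are routine facts about congruence of Hermitian matrices, since the congruence $H \mapsto M^T H \overline{M}$ preserves both the Hermitian property and the rank, but they are precisely what lets me transport the clean coordinate-free conclusion back into the language of Theorem \ref{thm:tangentOfHermVar}.
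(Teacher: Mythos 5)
The paper offers no proof of this statement to compare against: it is imported wholesale as Theorem 7.4 of Bose and Chakravarti \cite{bose1966hermitian}, so your argument can only be judged on its own merits. On those merits it is correct. The chain of identifications --- $\pi=\langle\mathbf{c}\rangle^{\perp}$ by the paper's definition of the tangent space, $\pi^{\perp}=\langle\mathbf{c}\rangle$ by double perpendicularity of a non-degenerate reflexive form, $\mathbf{c}\in\pi$ precisely because $B(\mathbf{c},\mathbf{c})=0$ (i.e.\ $c\in V$), hence radical of $B|_{\pi}$ equal to $\pi\cap\pi^{\perp}=\langle\mathbf{c}\rangle$ --- is exactly the right mechanism, and it delivers both conclusions at once: rank $n-1$ by rank--nullity on the $n$-dimensional space underlying $\pi$, and singular space $\{c\}$. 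The interface step you flag is genuinely routine and you state its correct resolution: writing a basis of $\pi$ as the columns of an $(n+1)\times n$ matrix $M$, the induced form is $H'=M^{T}H\overline{M}$, which is again Hermitian, the paper's singularity condition $\mathbf{u}^{T}H'=\mathbf{0}$ is exactly membership in the radical of $B|_{\pi}$, and $\mathrm{rank}\,H'=n-\dim(\mathrm{radical})$. One small caution on wording: for this non-square $M$ the map $H\mapsto M^{T}H\overline{M}$ is not a congruence and certainly does not preserve rank (it drops from $n+1$ to $n-1$ here); congruence invariance of rank is only what you need for the \emph{well-definedness} of the rank of $U$ under change of basis within $\pi$, i.e.\ for invertible $M$, and you should separate those two uses. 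Compared with the matrix-computational style of the original reference, your polarity formulation is shorter and makes transparent why the singular space is a single point.
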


\begin{theorem}[Theorem 8.1 in \cite{bose1966hermitian}]\label{thm:numPointsHermVar}
The number of points on a non-degenerate Hermitian variety is
$$\phi(n,q) = (q^{(n+1)/2} - (-1)^{n+1})(q^{n/2} -(-1)^n) (q - 1)^{-1}.$$
The number of points on a degenerate Hermitian variety of rank $r$ is
$$(q^{n-r+1}-1)\phi(r-1,q) + (q^{n-r+1} - 1)(q - 1)^{-1} + \phi(r-1,q).$$
\end{theorem}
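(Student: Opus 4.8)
The plan is to reduce to a diagonal canonical form, count affine solutions of the resulting norm equation by a one-step recursion, and then projectivize. Write $s = q^{1/2}$, and let $N(x) = x\overline{x} = x^{s+1}$ denote the norm from $\mathbb{F}_q$ to its subfield $\mathbb{F}_s$. The first step is the classification of Hermitian forms: since the diagonal entries satisfy $h_{ii} = \overline{h_{ii}}$ and so lie in $\mathbb{F}_s$, while $N \colon \mathbb{F}_q^* \to \mathbb{F}_s^*$ is a surjective homomorphism with fibers of size $s+1$, a Gram--Schmidt argument diagonalizes any rank-$r$ Hermitian matrix, after which rescaling each coordinate $x_i \mapsto x_i/t_i$ (choosing $t_i$ with $N(t_i)$ equal to the $i$-th diagonal entry) brings the form to $\sum_{i=0}^{r-1} N(x_i)$. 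Because this change of coordinates is an invertible $\mathbb{F}_q$-linear map, it induces a bijection of $\PG[n,q]$ and hence preserves the number of points of $V$; so it suffices to count points of the canonical varieties.

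For the non-degenerate case ($r = n+1$) I would count affine solutions. Let $m_k$ denote the number of $(x_0,\dots,x_{k-1}) \in \mathbb{F}_q^k$ with $\sum_{i<k} N(x_i) = 0$. Conditioning on $N(x_{k-1})$, using that $N$ attains $0$ once and each value of $\mathbb{F}_s^*$ exactly $s+1$ times, together with the total count $\sum_{b \in \mathbb{F}_s}(\text{number of }(x_0,\dots,x_{k-2})\text{ with }\sum_{i<k-1}N(x_i)=b) = q^{k-1}$, gives the recursion $m_k = (s+1)q^{k-1} - s\, m_{k-1}$ with $m_1 = 1$. Solving this linear recursion (particular solution $q^k/s = s^{2k-1}$, homogeneous solution $C(-s)^k$) yields $m_k = s^{2k-1} + (-1)^k (s-1)s^{k-1}$. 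Since $\sum N(x_i)=0$ defines a cone (as $N(\lambda x_i) = N(\lambda)N(x_i)$) on which $\mathbb{F}_q^*$ acts freely on nonzero points, the number of projective points is $(m_{n+1} - 1)/(q-1)$, and a short factorization check confirms this equals the stated $\phi(n,q)$.

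For a degenerate variety of rank $r$, the canonical form $\sum_{i=0}^{r-1} N(x_i) = 0$ leaves $x_r,\dots,x_n$ free, exhibiting $V$ as a cone with vertex the singular $(n-r)$-flat $\{x_0 = \dots = x_{r-1} = 0\}$ over a non-degenerate Hermitian variety in the complementary $\PG[r-1,q]$. The affine count is therefore $m_r\, q^{n-r+1}$, so $V$ has $(m_r q^{n-r+1} - 1)/(q-1)$ points; substituting $m_r = (q-1)\phi(r-1,q) + 1$ and simplifying gives exactly $q^{n-r+1}\phi(r-1,q) + (q^{n-r+1}-1)/(q-1)$, which one checks is the displayed expression $(q^{n-r+1}-1)\phi(r-1,q) + (q^{n-r+1}-1)(q-1)^{-1} + \phi(r-1,q)$.

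The main obstacle is the first step, the classification of Hermitian forms over $\mathbb{F}_q$ up to equivalence, i.e.\ the claim that rank is a complete invariant. Everything afterward is a mechanical recursion and cone count, but the diagonalization needs care: one must verify that a nonzero Hermitian form always represents a nonzero value (so that Gram--Schmidt can begin and proceed), and the rescaling step relies crucially on the surjectivity of the norm onto $\mathbb{F}_s^*$, which is special to the quadratic extension $\mathbb{F}_{s^2}/\mathbb{F}_s$. Alternatively, one could replace the recursion by an additive-character computation of the solution count, reducing the problem to a standard Gauss-sum evaluation, but the classification step would still be needed to identify the form.
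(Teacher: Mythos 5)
The paper does not prove this statement at all --- it is quoted verbatim as Theorem 8.1 of Bose and Chakravarti \cite{bose1966hermitian} and used as a black box --- so the only comparison to make is with the classical argument, which is essentially what you have reconstructed. Your proof is correct: the recursion $m_k = (s+1)q^{k-1} - s\,m_{k-1}$ with $m_1 = 1$ does follow from conditioning on $N(x_{k-1})$ together with the fiber sizes of the norm map (you do not even need equidistribution of the nonzero level sets, just the total count and $A_0 = m_{k-1}$); its solution $m_k = s^{2k-1} + (-1)^k(s-1)s^{k-1}$ is right; expanding $(s^{n+1}-(-1)^{n+1})(s^n-(-1)^n)$ confirms $(m_{n+1}-1)/(q-1) = \phi(n,q)$; and in the degenerate case the cone count $(m_r q^{n-r+1}-1)/(q-1)$ does rearrange to the displayed expression, since $(q^{n-r+1}-1)\phi(r-1,q) + \phi(r-1,q) = q^{n-r+1}\phi(r-1,q)$. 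The one step you flag but do not carry out --- that a nonzero Hermitian form represents a nonzero value, so Gram--Schmidt can begin and proceed --- is a genuine obligation, but it is a two-line polarization argument: if $\mathbf{x}^T H \overline{\mathbf{x}} = 0$ for all $\mathbf{x}$, then $\mathbf{x} = \mathbf{e}_i$ gives $h_{ii} = 0$, and $\mathbf{x} = \mathbf{e}_i + \lambda \mathbf{e}_j$ then gives $\mathrm{Tr}_{\mathbb{F}_q/\mathbb{F}_s}\bigl(\lambda \overline{h_{ij}}\bigr) = 0$ for every $\lambda \in \mathbb{F}_q$, whence $h_{ij} = 0$ by nondegeneracy of the trace form; so $H \neq 0$ yields a vector of nonzero length, and that length lies in $\mathbb{F}_s^*$ (being fixed by conjugation), exactly as the rescaling step requires. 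With that lemma inserted, your write-up is a complete, self-contained proof of the cited theorem, by essentially the same canonical-form-plus-counting route as the original source.
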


The parameters of the construction mentioned in the introduction follow by taking $\alpha = 1/2$ in the following proposition.

\begin{proposition}\label{thm:strongUnionOfLinesHermVarConstruction}
Let $q = p^2$ for a prime power $p$, and let $0 < \alpha < 1$.
Then, there is a set $L$ of $(\alpha + o(1)) q^{7/2}$ lines in $\mathbb{F}_q^3$ such that no plane contains more than $(\alpha + o(1))q^{3/2}$ lines of $L$, and {$|P(L)| \leq q^3 - (1 - \alpha + o(1))q^{5/2}$}.
\end{proposition}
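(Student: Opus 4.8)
The plan is to realize $L$ as a random family of tangent lines to a non-degenerate Hermitian variety $V$ in $\PG[3,q]$, transferring the construction to the affine space $\mathbb{F}_q^3$ at the very end by deleting a plane at infinity (which alters every quantity below only by lower-order terms). First I would fix a non-degenerate Hermitian variety $V \subseteq \PG[3,q]$; by Theorem \ref{thm:numPointsHermVar} it has $\phi(3,q) = (q+1)(q^{3/2}+1) = (1+o(1))q^{5/2}$ points, none of them singular. The key local fact I would establish is that through each point $c \in V$ there pass exactly $q - q^{1/2}$ tangent lines, all lying in the tangent plane $\pi_c$ at $c$, each meeting $V$ only at $c$. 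One direction follows from Theorem \ref{thm:tangentOfHermVar}: $\pi_c \cap V$ is a rank-$2$ degenerate variety with singular space $\{c\}$, which by Theorem \ref{thm:lineContainedinHermVar} is the union of the $q^{1/2}+1$ lines of $V$ through $c$, so the remaining $q - q^{1/2}$ lines of $\pi_c$ through $c$ meet $V$ only at $c$. That no further tangent lines occur outside $\pi_c$ I would get from the elementary dichotomy for a line through $c$ with second point $d$: writing $A = \mathbf{c}^T H \overline{\mathbf{d}}$ and $B = \mathbf{d}^T H \overline{\mathbf{d}}$, the line is secant when $A \neq 0$ and is tangent or contained when $A = 0$ (that is, when $d \in \pi_c$), which pins down the tangent lines exactly.

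I would then let $W \subseteq V$ be a random subset including each point of $V$ independently with probability $\alpha$, and take $L$ to be the set of all tangent lines at points of $W$. Since every tangent line has a unique point of tangency, lines attached to distinct points of $W$ are distinct, so $|L| = |W|(q - q^{1/2})$; a Chernoff bound gives $|W| = (\alpha + o(1))q^{5/2}$ with high probability, whence $|L| = (\alpha + o(1))q^{7/2}$.

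For the coplanarity bound I would classify a plane $\pi$ by its pole, the point $c$ with $\pi_c = \pi$, well defined since $H$ is invertible, according to whether it lies in $V$. If the pole lies in $V$ then $\pi = \pi_c$ is a tangent plane, and by the local structure above every tangent line of $V$ in $\pi_c$ is tangent at the single point $c$; hence $\pi_c$ contains at most $q - q^{1/2} = o(q^{3/2})$ lines of $L$. If the pole is not in $V$, then by Lemma \ref{thm:flatSpaceIntersectionWithHermVar} and Theorem \ref{thm:numPointsHermVar} the curve $U_\pi = \pi \cap V$ is a non-degenerate Hermitian curve with $q^{3/2}+1$ points, through each of which passes a unique tangent line of $U_\pi$ inside $\pi$; a line of $L$ lies in $\pi$ exactly when it is the tangent at some point of $U_\pi \cap W$, so $|L \cap \pi| = |U_\pi \cap W|$. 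A Chernoff bound gives $|U_\pi \cap W| \le (\alpha + o(1))q^{3/2}$ for a fixed $\pi$, and since there are only $O(q^3)$ planes while $q^{3/2}$ dominates $\log q$, a union bound makes this hold for all planes simultaneously with high probability.

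Finally, for $|P(L)|$ I would analyze the complement. A point $x \in V$ is covered only by a tangent line at $x$ itself, since any line joining $x$ to another point of $V$ meets $V$ twice and so is not tangent; thus the covered points of $V$ are exactly $W$, leaving $|V \setminus W| = (1 - \alpha + o(1))q^{5/2}$ uncovered points on $V$. For $x \notin V$, the tangency points of tangent lines through $x$ are exactly $V \cap \pi_x^{*}$, where $\pi_x^{*}$ is the polar plane of $x$; as $x \notin V$ this plane has pole off $V$, so $V \cap \pi_x^{*}$ is again a curve of $q^{3/2}+1$ points, and $x$ is uncovered precisely when all of them avoid $W$, an event of probability $(1-\alpha)^{q^{3/2}+1}$. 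The expected number of uncovered points off $V$ is $q^3(1-\alpha)^{q^{3/2}+1} = o(1)$, so with high probability none is left uncovered. Combining, $|P(L)^c| = (1 - \alpha + o(1))q^{5/2}$, i.e.\ $|P(L)| = q^3 - (1 - \alpha + o(1))q^{5/2}$. All three estimates hold simultaneously with positive probability, so the desired $L$ exists; passing from $\PG[3,q]$ to $\mathbb{F}_q^3$ by deleting a plane at infinity discards $o(q^{7/2})$ lines and $O(q^2)$ points and preserves every stated bound. The main obstacle is the coplanarity estimate: one must rule out a plane accumulating too many tangent lines, and the crucial structural point that makes this work is that all tangent lines inside a tangent plane share one tangency point, so tangent planes contribute only $\Theta(q)$ lines, while generic planes meet $V$ in a Hermitian curve and hence carry only $(\alpha+o(1))q^{3/2}$ lines of $L$.
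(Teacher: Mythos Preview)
Your argument is correct and follows essentially the same route as the paper: take a random subset $W$ of a non-degenerate Hermitian variety $V\subset\PG[3,q]$, let $L$ be the tangent lines at points of $W$, use the structure of $\pi_c\cap V$ to count tangent lines per point, and handle the coplanarity constraint by a Chernoff/union bound over planes. The only cosmetic difference is that the paper samples a fixed-size subset uniformly while you sample each point independently with probability $\alpha$; these are interchangeable here. You also supply two details the paper leaves implicit: you verify via the polar plane that every point off $V$ is covered with high probability (giving the matching lower bound on $|P(L)|$, whereas the paper only argues the upper bound from $V\setminus W$), and you make explicit the passage from $\PG[3,q]$ to $\mathbb{F}_q^3$.
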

\begin{proof}
Let $V$ be a non-degenerate Hermitian variety in $\PG[3,q]$.
By Theorem \ref{thm:numPointsHermVar}, we have $|V| = (1+o(1))q^{5/2}$.
Let $P$ be a set of $\lfloor \alpha |V| \rfloor$ of the points of $V$, chosen uniformly at random.
Let $L$ be the set of tangent lines to $V$ at points of $P$.
Since the tangent lines intersect $V$ only at their points of tangency, it is clear that the $\lceil (1-\alpha)|V| \rceil = (1-\alpha+o(1))q^{5/2}$ points of $V \setminus P$ are not incident to any line of $L$.
It remains to show $|L| = (\alpha + o(1)) q^{7/2}$, and that no plane contains more than $(\alpha+o(1))(q^{3/2})$ lines of $L$.

By Theorem \ref{thm:tangentOfHermVar}, the tangent plane $\Sigma$ to $V$ at an arbitrary point $c \in P$ intersects $V$ in a rank $2$ Hermitian variety $U \subseteq \Sigma$, having the single singular point $c$.
From the second part of Theorem \ref{thm:numPointsHermVar}, we have that $U$ contains $q^{3/2} + q + 1$ points.
Together with Theorem \ref{thm:lineContainedinHermVar}, this implies that $U$ is the union of $q^{1/2} + 1$ lines coincident at $c$.
The remaining $q-q^{1/2}$ lines contained in $\Sigma$ and incident to $c$ are tangent lines to $V$.
Hence, $L$ consists of $(q-q^{1/2})|P| = (\alpha + o(1))q^{7/2}$ distinct lines, and tangent planes to $V$ each contain at most $q-q^{1/2}$ lines of $L$.

By Lemma \ref{thm:flatSpaceIntersectionWithHermVar}, the intersection of a plane $\Sigma$ with $V$ is a Hermitian variety $U$; if $\Sigma$ is not tangent to $V$, then $U$ is non-degenerate.
By Theorem \ref{thm:numPointsHermVar}, we have that $|U|=q^{3/2}+q+1$, and there is a single tangent line at each of these points.
In addition, a line of $L$ will be contained in $\Sigma$ only if it is tangent to one of the points of $U$.
Hence, in order to show that no plane contains more than $(\alpha+o(1))q^{3/2}$ lines of $L$, it suffices to show that no plane contains more than $(\alpha + o(1)) q^{3/2}$ points of $P$.

The expected number of points of $P$ on $\Sigma$ is $\alpha |U|$.
Since the points of $P$ are chosen uniformly at random, the Chernoff bound for Bernoulli random variables implies that, for any $0 < \delta < 1$, the probability that we have more than $(1+\delta) \alpha |U|$ points of $P$ on $\Sigma$ is bounded above by $e^{-\delta^2 \alpha |U| / 3}$.
Taking a union bound over the $(1+o(1))q^3$ planes in $\PG[3,q]$, we have that the probability that any plane has more than $(1 + \delta) \alpha |U|$  points of $P$ is bounded above by $(1+o(1))q^{3}e^{-(1+o(1))\delta^2 \alpha q^{3/2}/3}$.
Hence, taking $\delta > (1+o(1)) 9 \alpha^{-1} q^{-3/4} \log q = o(1)$ ensures that this happens with probability strictly less than $1$, and hence there is a choice of $P$ such that there are fewer than $(\alpha+o(1)) q^{3/2}$ on any plane.

\end{proof}

\section{Acknowledgment}
We thank Anurag Bishnoi for comments on an earlier version of this paper.

\bibliographystyle{plain}
\bibliography{KakeyaandNIkodym}

\end{document}